\DeclareMathOperator{\yy}{\mathbf{y}}
\DeclareMathOperator{\zz}{\mathbf{z}}
\DeclareMathOperator{\vv}{\mathbf{v}}
\DeclareMathOperator{\ww}{\mathbf{w}}
\DeclareMathOperator{\Arows}{A_{\text{Rows}}}
\newtheorem{thm}{Theorem}[section]
\newtheorem*{thm*}{Theorem}
\newtheorem{prop}[thm]{Proposition}
\newtheorem*{prop*}{Proposition}
\newtheorem{cor}[thm]{Corollary}
\newtheorem*{cor*}{Corollary}
\newtheorem{lem}[thm]{Lemma}
\newtheorem*{lem*}{Lemma}
\newtheorem{conj}[thm]{Conjecture}
\newtheorem*{oquest*}{Open Question}
\theoremstyle{remark}
\newtheorem{rmk}[thm]{Remark}
\theoremstyle{remark}
\newtheorem*{rmk*}{Remark}
\theoremstyle{definition}
\theoremstyle{definition}
\newtheorem*{defn*}{Definition}
\theoremstyle{definition}
\newtheorem{ex}[thm]{Example}
\numberwithin{equation}{section}
\newcommand{\Z}{\mathbb{Z}}
\newcommand{\QQ}{\mathbb{Q}}
\DeclareMathOperator{\FFF}{\mathbb{F}}
\newcommand{\SelT}{\text{Sel}^{(2)}}
\DeclareFontFamily{U}{wncy}{}
\DeclareFontShape{U}{wncy}{m}{n}{<->wncyr10}{}
\DeclareSymbolFont{mcy}{U}{wncy}{m}{n}
\DeclareMathSymbol{\Sha}{\mathord}{mcy}{"58}
\newcommand{\Lsha}{\mathscr{L}}
\begin{document}


\title[The density of congruent numbers]{
The congruent numbers have positive natural density
}

\author{
Alexander Smith
}

\thanks{I would like to thank Shou-Wu Zhang for suggesting this problem and for serving as my advisor as I worked on this for my undergraduate thesis. I would also like to thank him and Christopher Skinner for valuable conversations in the course of preparing this paper.}

\date{\today}

\begin{abstract}
We prove that the rational elliptic curve $y^2 = x^3 - n^2x$ satisfies the full Birch and Swinnerton-Dyer conjecture for at least $41.9\%$ of positive squarefree integers $n \equiv 1, 2, 3 \, (8)$ and satisfies the regular BSD conjecture for at least $55.9\%$ of positive squarefree integers $n \equiv 5, 6, 7\,(8)$. In particular, at least $55.9\%$ of positive squarefree integers $n \equiv 5, 6, 7\,(8)$ are congruent numbers. These proofs complete the arguments started by Tian, Yuan, and Zhang in \cite{Tian14}.
\end{abstract}

\maketitle

\section{Introduction}
Suppose $E/\QQ$ is a rank zero elliptic curve of conductor $N$, and take $L(s, E)$ to be its associated $L$ function. One consequence of the full Birch and Swinnerton-Dyer (BSD) conjecture would be that
\[\frac{L(1, E) \cdot (\# E_{tors})^2}{\Omega(E) \prod_{p | 2N} c_p(E)} = |\Sha(E)|\]
where the $c_p$ are Tamagawa factors and $\Omega(E)$ is the least positive real period of $E$. Calling the expression on the left $\Lsha(E)$, this equation would in turn have the following consequence.
\begin{conj}
\label{conj:main}
If $E/\QQ$ is an elliptic curve without rational four torsion, then $\Lsha(E)$ is an odd integer if and only if the $2$-Selmer group $\SelT(E)$ is generated by the image of $E(\QQ)[2] \subseteq E(\QQ)$ in the exact sequence
\[0 \rightarrow E(\QQ)/2E(\QQ) \rightarrow \SelT(E) \rightarrow \Sha(E)[2] \rightarrow 0.\]
\end{conj}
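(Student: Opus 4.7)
The plan is to derive Conjecture~\ref{conj:main} directly from the full BSD formula $\Lsha(E) = |\Sha(E)|$ together with the rank part of BSD ($\text{ord}_{s=1} L(s,E) = \rank E(\QQ)$), applied to the displayed exact sequence. Granted these inputs the argument is essentially linear algebra: both conditions in the conjecture will turn out to be equivalent to the combination ``$\rank E(\QQ) = 0$ and $\Sha(E)[2] = 0$.''

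For the forward direction, suppose $\Lsha(E)$ is an odd integer. It is in particular nonzero, so $L(1,E) \ne 0$ and the rank part of BSD gives $E(\QQ) = E(\QQ)_{\text{tors}}$. The BSD formula then says $|\Sha(E)| = \Lsha(E)$ is odd, so $\Sha(E)[2] = 0$ and the exact sequence collapses to an isomorphism $E(\QQ)/2E(\QQ) \xrightarrow{\sim} \SelT(E)$. It remains to check that the image of $E(\QQ)[2]$ generates $E(\QQ)/2E(\QQ)$; since multiplication by $2$ on the finite group $E(\QQ)_{\text{tors}}$ has kernel and cokernel of equal $\FFF_2$-dimension, this is equivalent to the injectivity of $E(\QQ)[2] \to E(\QQ)/2E(\QQ)$. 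The kernel of this map is $E(\QQ)[2] \cap 2E(\QQ)$, and any $P = 2Q$ with $P \in E(\QQ)[2]$ forces $Q \in E(\QQ)[4]$; by the no-$4$-torsion hypothesis $Q \in E(\QQ)[2]$, so $P = 0$.

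For the converse, suppose $\SelT(E)$ is generated by the image of $E(\QQ)[2]$. Then $\dim_{\FFF_2} \SelT(E) \le \dim_{\FFF_2} E(\QQ)[2]$, while the exact sequence together with Mordell--Weil gives
\[\dim_{\FFF_2} \SelT(E) = \rank E(\QQ) + \dim_{\FFF_2} E(\QQ)[2] + \dim_{\FFF_2} \Sha(E)[2].\]
Both non-negative contributions beyond $\dim_{\FFF_2} E(\QQ)[2]$ must therefore vanish, so $\rank E(\QQ) = 0$ and $\Sha(E)[2] = 0$. The rank part of BSD then gives $L(1,E) \ne 0$, and the BSD formula identifies $\Lsha(E)$ with the odd positive integer $|\Sha(E)|$.

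The main obstacle is that the argument rests entirely on full BSD, which is precisely what the paper is ultimately trying to prove. Accordingly, Conjecture~\ref{conj:main} can only be useful in the direction ``control of $\SelT(E_n)$ implies an odd integer value of $\Lsha(E_n)$,'' and to make that deduction unconditional one must replace the BSD formula with an independent bridge between the algebraic and analytic sides — in the congruent-number setting, the Waldspurger-type and $p$-adic height formulas assembled by Tian, Yuan, and Zhang in \cite{Tian14}. The formal reduction sketched above is only the first link in a much longer chain.
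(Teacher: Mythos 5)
Your derivation is correct and is essentially the argument the paper itself sketches: the statement is labeled Conjecture \ref{conj:main} precisely because it is what full BSD (the rank part, finiteness of $\Sha(E)$, and the formula $\Lsha(E)=|\Sha(E)|$ at rank zero) formally implies, via the same dimension count in the displayed exact sequence and the same use of the no-rational-four-torsion hypothesis to see that $E(\QQ)[2]\to E(\QQ)/2E(\QQ)$ is injective. As you yourself note, this is only the motivating reduction and not a proof; the paper never proves the conjecture in general, and its unconditional content (Theorem \ref{thm:rk0}, the conjecture for all twists $E^{(n)}$ of the congruent number curve) is obtained by the entirely different route of matching Monsky's matrix description of $\SelT(E^{(n)})$ against the determinant reformulation of the Tian--Yuan--Zhang formulas (Theorem \ref{thm:tab_power}).
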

This is a substantially weaker conjecture than full BSD, but has the advantage of seeming tractable. The $2$-Selmer group is readily computable, with Birch and Swinnerton-Dyer giving an effective algorithm to find it in the same article where they first formulated their namesake conjecture \cite{BSD63}. The critical value of the $L$ function is slightly less nice, but can still be readily computed directly or (as was done for this paper) via the Waldspurger formula \cite{Wald85}.

Indeed, for quadratic twists of the congruent number curve, there are already elementary formulations of both sides of this conjecture. For the rest of the paper, take $E$ to be the congruent number curve, and for any positive squarefree integer $n$ take $E^{(n)}$ to be the quadratic twist of $E$ given by Weierstrass equation
\[y^2 = x^3 - n^2x.\]
We will use the notation $E^{(n)}$ to refer to this curve for the rest of the paper.

Write the odd part of $n$ as a product of primes $p_1\dots p_r$. In \cite{Heat94}, Monsky showed that the  $2$-Selmer rank of $E^{(n)}$ can be computed as the corank of a matrix with entries in $\FFF_2$ that is determined entirely by $n$ mod $2$ and the Legendre symbols 
\[\left(\frac{p_i}{p_j}\right),\,\, \left(\frac{-1}{p_i}\right) \mbox{ and  }\left(\frac{2}{p_i}\right) \mbox{ over all } i, j \le r, i \ne j.\]
Much more recently, the parity of $\Lsha(E^{(n)})$ was determined from the same information by Tian, Yuan, and Zhang \cite{Tian14}. We complete their argument to prove the following.
\begin{thm}
\label{thm:rk0}
Conjecture \ref{conj:main} is true for all quadratic twists of the congruent number curve.
\end{thm}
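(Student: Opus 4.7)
The approach is to express both sides of Conjecture~\ref{conj:main} as explicit combinatorial conditions on the Legendre symbols between the odd prime factors of $n$, and then to verify their equivalence by a linear algebra comparison over $\FFF_2$. Since $E^{(n)}$ has full rational two-torsion $\{O, (0,0), (n,0), (-n,0)\}$, the image of $E^{(n)}(\QQ)[2]$ in $\SelT(E^{(n)})$ has $\FFF_2$-dimension $2$, so the right-hand condition of Conjecture~\ref{conj:main} is equivalent to $\dim_{\FFF_2}\SelT(E^{(n)}) = 2$. By Monsky's theorem this translates into an explicit full-rank condition on the Legendre symbol matrix $M(n)$. On the other side, the parity result of Tian, Yuan, and Zhang determines the $2$-adic valuation of $\Lsha(E^{(n)})$ in terms of the same data, so ``$\Lsha(E^{(n)})$ is an odd integer'' likewise becomes a condition on $M(n)$, provided we can separately establish $\Lsha(E^{(n)}) \in \Z$.

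I would split the argument into two cases according to $n \bmod 8$. If $n \equiv 5,6,7\,(8)$, the global root number of $E^{(n)}$ is $-1$, so $L(1,E^{(n)}) = 0$ and $\Lsha(E^{(n)}) = 0$, which is not an odd integer; Monsky's parity computation forces the $2$-Selmer rank to be at least $3$ in this case, so $\SelT(E^{(n)})$ strictly contains the image of the two-torsion, and both sides of Conjecture~\ref{conj:main} fail simultaneously. If $n \equiv 1,2,3\,(8)$, the root number is $+1$ and the argument is substantive. For the forward direction, the Selmer condition forces algebraic rank zero and $\Sha(E^{(n)})[2] = 0$, and I would invoke Kolyvagin together with the Heegner point and Waldspurger formula input of \cite{Tian14} to conclude $\Lsha(E^{(n)}) \in \Z$ and to read off its parity. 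For the converse, if $\Lsha(E^{(n)})$ is an odd integer, then $L(1,E^{(n)}) \neq 0$ forces algebraic rank zero, and the parity formula of \cite{Tian14} then reads as a full-rank condition on precisely the data underlying $M(n)$.

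The technical heart of the proof is the $\FFF_2$-linear algebra identity matching these two conditions. Monsky's rank condition comes from a descent computation, while the Tian--Yuan--Zhang parity formula comes from a refined Waldspurger-type analysis of $L(1,E^{(n)})$ together with the Tamagawa and period factors in $\Lsha(E^{(n)})$. Both live in the ring of $\FFF_2$-valued functions of the Legendre symbols $\left(\tfrac{p_i}{p_j}\right)$, $\left(\tfrac{-1}{p_i}\right)$, $\left(\tfrac{2}{p_i}\right)$, but they are packaged differently: Monsky's condition is the nondegeneracy of a quadratic form / the full rank of a matrix, whereas the parity formula is (essentially) the vanishing of a particular $\FFF_2$-valued invariant. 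I expect the main obstacle to be an explicit algebraic identification of these two invariants, which I would attempt either by exhibiting both as the parity of a common determinant after a change of basis, or by showing that the difference lies in an ideal generated by relations forced by the product formula and quadratic reciprocity applied to $M(n)$.
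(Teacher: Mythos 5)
Your overall strategy — rephrase Monsky's $2$-Selmer rank formula and the Tian--Yuan--Zhang parity formula as conditions on the same Legendre-symbol data, then show they coincide — is exactly the structure of the paper's argument, and you correctly dispatch the root-number $-1$ cases $n\equiv 5,6,7\,(8)$ (which the paper leaves implicit): both sides of the conjecture fail there. But two things keep this from being a proof.

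First, the Kolyvagin/Waldspurger detour in your forward direction is a misstep. You do not need to establish $L(1,E^{(n)})\ne 0$ independently and then extract integrality of $\Lsha(E^{(n)})$; Theorem~\ref{thm:TYZ} as stated already gives an unconditional congruence $\Lsha(E^{(n)})\equiv\Lsha_x(n)\pmod 2$ for $n\equiv 1,2,3\,(8)$, with integrality built in. The paper's proof runs purely through this congruence: $\Lsha(E^{(n)})$ odd $\Leftrightarrow$ $\Lsha_x(n)=1$ $\Leftrightarrow$ $\det M_x\ne 0$ $\Leftrightarrow$ $\mathrm{rk}\,\SelT(E^{(n)})=2$. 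No appeal to Kolyvagin is needed, and invoking it the way you do risks circularity since Kolyvagin goes from analytic rank to algebraic rank, not the reverse.

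Second, and more fundamentally, the equivalence $\Lsha_x(n)=1 \Leftrightarrow \det M_x\ne 0$ is precisely where you stop, writing only that you would attempt to identify the two invariants as parities of a common determinant. That identification is not a routine change of basis: it is Theorem~\ref{thm:tab_power}, the main technical theorem of the paper, whose proof occupies essentially all of Section~2 (the recursive determinant identity of Proposition~\ref{prop:main_rec}, the involution/closure analysis of Proposition~\ref{prop:F_rec}, and the seven further cases). The TYZ expressions $\Lsha_x(n)$ are recursively defined sums over divisors weighted by R\'edei class-group invariants $g(d)$; packaging these as determinants of the explicit block matrices $M_x$ is the genuine content here, and your proposal acknowledges it as the ``main obstacle'' without closing it. Also note that even after Theorem~\ref{thm:tab_power} is available, the $n\equiv 3\,(8)$ case needs the extra rank-drop observation about $M_3\big[[2r],[2r]\big]$ versus $M_3$, which your proposal does not anticipate.
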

Now, $E^{(n)}$ is a CM elliptic curve with CM field $\QQ(i)$, so we can use Rubin's proof of the main conjecture of Iwasawa theory for imaginary quadratic fields \cite{Rubi91} to say
\[v_p\left(\Lsha(E^{(n)})\big/|\Sha(E^{(n)})|\right) = 0\]
for any $E^{(n)}$ of analytic rank $0$ and any odd prime $p$. Waldspurger's formula implies that $\Lsha(E^{(n)})$ is nonnegative, so we get the following lovely corollary to Theorem \ref{thm:rk0}.
\begin{cor}
The elliptic curve
\[E^{(n)}: y^2 = x^3 - n^2x\]
satisfies the full BSD conjecture if $\SelT(E^{(n)})$ is generated by $2$-torsion, or rather has rank two. In particular, given $t \in \{1, 2, 3\}$, $E^{(n)}$ satisfies full BSD for at least 41.9\% of squarefree positive $n \equiv t \, (8)$.
\end{cor}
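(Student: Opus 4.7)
The plan is to split the corollary into two independent claims: (i) whenever $\SelT(E^{(n)})$ has $\FFF_2$-dimension $2$---equivalently, is generated by the image of the full $2$-torsion $E^{(n)}(\QQ)[2] \cong (\Z/2\Z)^2$---the full BSD formula holds for $E^{(n)}$; and (ii) for each $t \in \{1,2,3\}$, the squarefree positive $n \equiv t \,(8)$ satisfying this Selmer condition have natural density at least $41.9\%$.

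For (i), I would combine Theorem \ref{thm:rk0} with Rubin's Iwasawa-theoretic input \cite{Rubi91} already used in the excerpt. Feeding the Selmer hypothesis into Conjecture \ref{conj:main} (which is now Theorem \ref{thm:rk0}) says $\Lsha(E^{(n)})$ is an odd integer; by the Waldspurger nonnegativity noted above, it is a \emph{positive} odd integer, so $L(1,E^{(n)}) > 0$ and the analytic rank vanishes. Since $E^{(n)}$ has CM by $\QQ(i)$, work of Coates--Wiles and Rubin then forces the Mordell--Weil rank to be zero and $\Sha(E^{(n)})$ to be finite, while Rubin's theorem further gives $v_p\bigl(\Lsha(E^{(n)})/|\Sha(E^{(n)})|\bigr) = 0$ for every odd prime $p$. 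To handle $p = 2$, I would read off the exact sequence
\[0 \rightarrow E^{(n)}(\QQ)/2E^{(n)}(\QQ) \rightarrow \SelT(E^{(n)}) \rightarrow \Sha(E^{(n)})[2] \rightarrow 0\]
from Conjecture \ref{conj:main}: the Selmer hypothesis forces the right-hand term to vanish, so $|\Sha(E^{(n)})|$ is odd; combined with the oddness of $\Lsha(E^{(n)})$ this gives matching $2$-adic valuations, and one concludes $\Lsha(E^{(n)}) = |\Sha(E^{(n)})|$.

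For (ii), I would invoke Monsky's matrix description of $\SelT(E^{(n)})$ from \cite{Heat94}: the condition $\dim_{\FFF_2}\SelT(E^{(n)}) = 2$ is the ``generic'' non-degeneracy of the Monsky matrix attached to $n$. Heath-Brown's density computation in the same paper, restricted to $n \equiv t \,(8)$ with $t \in \{1,2,3\}$ (the alternating case, where the Selmer $\FFF_2$-corank is forced to be even), yields the density
\[\prod_{j \ge 0}\bigl(1 - 2^{-(2j+1)}\bigr) \approx 0.4194,\]
which exceeds the claimed $41.9\%$.

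The main obstacle is really the $2$-adic bookkeeping in (i): Theorem \ref{thm:rk0} supplies the parity of $\Lsha$, Rubin's theorem covers all odd primes, and the one new observation needed is that a rank-$2$ Selmer group annihilates $\Sha[2]$. Step (ii) is quoted wholesale from Heath-Brown; the paper's substantive new contribution enters only through Theorem \ref{thm:rk0}.
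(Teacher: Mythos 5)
Your proposal is correct and follows essentially the same route as the paper: Theorem \ref{thm:rk0} together with Waldspurger nonnegativity forces analytic rank zero, Rubin's CM Iwasawa theory settles the odd part while the rank-two Selmer hypothesis kills $\Sha(E^{(n)})[2]$, and the $41.9\%$ is quoted from Heath-Brown's $2$-Selmer rank distribution exactly as the paper does (your product $\prod_{j \ge 0}(1 - 2^{-(2j+1)})$ equals the paper's $\alpha_0 \approx 0.4194$ by Euler's identity). You merely make explicit the steps (Coates--Wiles, finiteness of $\Sha$, the $2$-adic bookkeeping) that the paper leaves implicit.
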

The percentage in this result comes from Heath-Brown's computation of the proportion of $E^{(n)}$ with specified $2$-Selmer rank in \cite{Heat94}.

In an apparent non sequitir, Tian, Yuan, and Zhang also use the Gross-Zagier formula to give conditions for when $E^{(n)}$ has analytic rank exactly one, and hence actual rank exactly one. Denote by $\Lsha_x(n)$ the expression in the second column of row $x$ in Table \ref{tab:tyz_tab}. They prove that, if
\begin{itemize}
\item either $\Lsha_{5a}(n)$ or $\Lsha_{5b}(n)$ is nonzero for $n$ equal to $5$ mod 8, or 
\item $\Lsha_{6}(n)$ is nonzero for $n$ equal to $6$ mod $8$, or
\item either $\Lsha_{7a}(n)$ or $\Lsha_{7b}(n)$ is nonzero for $n$ equal to $7$ mod $8$,
\end{itemize}
then $E^{(n)}$ has analytic rank one.

Though the methods of obtaining these conditions are quite different from those for calculating the parity of $\Lsha(E^{(n)})$, the resulting expressions are quite similar. The reason is simple. Similarly to Conjecture \ref{conj:main} we expect that, if $\SelT(E^{(n)})$ has rank three, $E^{(n)}$ should be forced to have analytic rank one. In the case of, for example, $n$ equal to $5$ mod $8$, this condition on Selmer groups is equivalent to a certain singular Monsky matrix $M$ having corank one. But $M$ has corank one if and only if
\[\left| \begin{array}{ll} M & \mathbf{v} \\ \mathbf{v}^{\top} & 0 \end{array} \right|\]
is nonzero for some choice of column vector $\mathbf{v}$. Conditions 5a and 5b amount to finding this determinant for two particular choices of $\mathbf{v}$.

This matrix is defined over $\mathbb{F}_2$, so we typically expect a given $\mathbf{v}$ to have a one half chance of not being in the column space of a corank-one $M$. Justifying this fact formally is hard but possible with an argument coming from Swinnerton-Dyer \cite{Swin08} and Kane \cite{Kane13}. We thus get the following theorem.

\begin{thm}
\label{thm:main}
For $t \in \{5, 6, 7\}$, consider the set
\[ S(t) = \left\{ n \in \mathbb{Z}^{+} : \, n \text{ is squarefree, } n \equiv t \, (8),\text{ and } \text{rk}\left(\SelT(E^{(n)}) \right) = 3 \right\}.\]
For a row $x$ agreeing with $t$, every $n \equiv t \, (8)$ with $\Lsha_x(n) \ne 0$ is in $S(t)$, and the natural density of such $n$ in $S(t)$ is $0.5$. Further, the natural density of $n \in S(5)$ with either $\Lsha_{5a}(n)$ or $\Lsha_{5b}(n)$ nonzero is $0.75$, and the natural density of $n \in S(7)$ with either $\Lsha_{7a}(n)$ or $\Lsha_{7b}(n)$ nonzero is $0.75$.
\end{thm}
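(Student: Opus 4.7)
The plan is to reduce the statement to an equidistribution result on Legendre symbols among the prime factors of $n$, followed by a short calculation in linear algebra over $\mathbb{F}_2$.

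First, following Monsky's parametrization, for each fixed number $r$ of odd prime divisors of $n$, the matrix $M(n)$, the condition $n \in S(t)$, and each auxiliary vector $\mathbf{v}_x(n)$ entering the bordered determinant that defines $\Lsha_x(n)$ are all $\mathbb{F}_2$-valued polynomial functions of the Legendre symbols $\left(\frac{p_i}{p_j}\right)$, $\left(\frac{-1}{p_i}\right)$, $\left(\frac{2}{p_i}\right)$ associated to $p_1,\dots,p_r$. Since $M(n)$ is always singular for the structural reasons recalled in the introduction, the non-vanishing of $\Lsha_x(n)$ is controlled by whether $\mathbf{v}_x(n)$ lies in the column space of $M(n)$; in particular $\Lsha_x(n)\ne 0$ forces $M(n)$ to have corank exactly one, so $n\in S(t)$, which proves the first assertion of the theorem.

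Second, I would appeal to the equidistribution result of Swinnerton-Dyer \cite{Swin08} as sharpened by Kane \cite{Kane13}. In the form needed here, as $n$ ranges over squarefree positive integers $n\le X$ in a fixed class modulo $8$ with exactly $r$ odd prime factors, the joint distribution of the symmetric matrix $\left(\frac{p_i}{p_j}\right)_{i,j}$ together with the diagonal data $\left(\frac{-1}{p_i}\right),\left(\frac{2}{p_i}\right)$ converges in total variation to the uniform distribution on $\mathbb{F}_2$-configurations compatible with quadratic reciprocity. The proof proceeds by the method of moments, using Polya--Vinogradov or Burgess-type bounds on the relevant character sums. One restricts to the typical range $r=(1+o(1))\log\log X$, since atypical $r$ contribute negligibly, and then integrates over $r$.

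Third, the density computation becomes a question about random matrices over $\mathbb{F}_2$. Conditioning on the corank-one event for $M(n)$, its column space is a hyperplane $W\subseteq\mathbb{F}_2^k$. If a single bordering vector $\mathbf{v}_x(n)$ is uniform and independent of $W$ under the conditioning, then $\Pro[\mathbf{v}_x\notin W]=1/2$, giving the claimed relative density $0.5$ inside $S(t)$. For rows $5$ and $7$, where two bordering vectors $\mathbf{v}_a,\mathbf{v}_b$ are available, the probability that at least one lies outside $W$ is $1-(1/2)^2=3/4$, provided $\mathbf{v}_a$ and $\mathbf{v}_b$ are jointly uniform and mutually independent of $W$ under the conditioning.

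The principal obstacle is precisely this conditional independence, because $M$, $\mathbf{v}_a$, and $\mathbf{v}_b$ are all polynomial functions of the same Legendre data. The key step is to isolate those Legendre symbols which appear in $\mathbf{v}_a$ or $\mathbf{v}_b$ but in no entry of $M$, to check that enough such ``fresh'' symbols exist to span the required degrees of freedom, and to then invoke the Kane-style equidistribution to conclude that these fresh symbols remain uniform after conditioning on the corank of $M$. The combinatorial identification of fresh symbols for each of the rows $5$, $6$, and $7$ is the part I expect to demand the most care and casework; once independence is secured the random-matrix calculation collapses to the two elementary probability computations above.
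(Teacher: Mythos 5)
Your first step (non-vanishing of $\Lsha_x$ forces $n \in S(t)$) matches the paper's Proposition \ref{prop:sel3} in substance, but the core of your plan has two genuine gaps. First, the equidistribution input you want to cite does not exist in the form you state: neither Swinnerton-Dyer \cite{Swin08} nor Kane \cite{Kane13} proves total-variation convergence of the full matrix of symbols $\left(\frac{p_i}{p_j}\right)$, $\left(\frac{-1}{p_i}\right)$, $\left(\frac{2}{p_i}\right)$ to the uniform distribution, and the sketch you give (character sums plus moments) cannot deliver it, because the available bounds save only a factor $\epsilon^m$ in terms of the number $m$ of ``active'' rows of a character, while the number of characters with $m$ active rows grows like $2^{mr}$; summing the errors over all $2^{\Theta(r^2)}$ characters, as a total-variation statement requires, diverges. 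Kane's method survives precisely because it computes specific moments of $2^{k\cdot\mathrm{crnk}}$, where the small-$m$ characters carry negligible combinatorial weight. Accordingly the paper does not black-box equidistribution: it proves a bespoke corank-distribution theorem (Theorem \ref{thm:ksd_ugly}) for a class of alternating matrices $M^{alt}$, and this occupies half the paper, including Lemma \ref{lem:swin}, which handles the fact that the new column added with each prime is \emph{not} uniformly distributed and which genuinely needs the technical hypotheses on $Q_1,Q_2$ (Example \ref{ex:AAT} shows the conclusion changes without them).

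Second, your strategy for conditional independence --- isolating Legendre symbols that appear in the bordering vectors but ``in no entry of $M$'' --- is a dead end: there are no such fresh symbols. The bordering vectors are built from $\yy$ and $\zz$, i.e.\ from $\left(\frac{-1}{p_i}\right)$ and $\left(\frac{2}{p_i}\right)$, and these already occur inside the Monsky matrix (in $D_z$ and through the relation $A_{ij}+A_{ji}=y_iy_j$), so conditioning on $M$ leaves no independent randomness in $\mathbf{v}_a,\mathbf{v}_b$. The paper circumvents this in a structurally different way: Proposition \ref{prop:to_ksd} rewrites each bordered determinant condition as the statement that a single alternating matrix $M^{alt}(n)$ has minimal corank $\delta$, so the density $0.5$ falls out as the ratio $\alpha_0/\alpha_1$ in Theorem \ref{thm:ksd_ugly} rather than from any independence of a bordering vector from a hyperplane. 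Likewise the $0.75$ is not obtained from joint independence of $\mathbf{v}_a$ and $\mathbf{v}_b$ (which would again need an unavailable joint statement); instead the sum $\Lsha_{5a}+\Lsha_{5b}$ (resp.\ $\Lsha_{7a}+\Lsha_{7b}$) is itself a determinant of the same admissible shape (Table \ref{tab:tyz_alt}), so each of the three events ``$\Lsha_{5a}\ne0$'', ``$\Lsha_{5b}\ne0$'', ``$\Lsha_{5a}+\Lsha_{5b}\ne0$'' has density $1/2$ in $S(5)$, and an exclusion computation with the XOR event gives $3/4$. Without a replacement for these two steps your argument does not go through.
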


The natural density of $n \equiv t \, (8)$ with $ \text{rk}\left(\SelT(E^{(n)}) \right) = 3$ is slightly greater than $.8388$, so this theorem has the following nice consequence.
\begin{thm}
\label{thm:boom}
\hfill 
\begin{itemize}
\item Of the positive squarefree integers equal to $5$ mod $8$, at least $62.9$\% are congruent numbers.
\item Of the positive squarefree integers equal to $6$ mod $8$, at least $41.9$\% are congruent numbers.
\item Of the positive squarefree integers equal to $7$ mod $8$, at least $62.9$\% are congruent numbers.
\end{itemize}
\end{thm}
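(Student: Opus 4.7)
The plan is to chain together Theorem \ref{thm:main}, the Tian--Yuan--Zhang analytic rank criterion recalled just before it, and Heath-Brown's computation of the $2$-Selmer rank distribution. Everything needed is already either stated or cited in the introduction; what remains is to combine the three inputs.

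First, recall that a squarefree $n$ is a congruent number if and only if $E^{(n)}(\QQ)$ has positive rank. By the Tian--Yuan--Zhang criterion stated just before Theorem \ref{thm:main}, for squarefree $n \equiv t \, (8)$ with $t \in \{5, 6, 7\}$, nonvanishing of any appropriate $\Lsha_x(n)$ forces $E^{(n)}$ to have analytic rank exactly one. For such a curve the Gross--Zagier formula combined with Kolyvagin's theorem gives that $E^{(n)}(\QQ)$ has algebraic rank one, so $n$ is a congruent number. Hence, for each residue class, counting squarefree $n$ with an appropriate $\Lsha_x(n) \ne 0$ gives a lower bound for the proportion of congruent numbers.

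To turn this into a density statement, observe that Theorem \ref{thm:main} asserts two things: every such $n$ lies in $S(t)$, and the conditional natural density inside $S(t)$ of those $n$ with some appropriate $\Lsha_x(n) \ne 0$ equals $0.75$ for $t \in \{5, 7\}$ and $0.5$ for $t = 6$. The introduction notes that Heath-Brown's distribution in \cite{Heat94} gives $S(t)$ itself a natural density strictly greater than $0.8388$ inside the squarefree integers congruent to $t$ mod $8$. Multiplying yields lower bounds
\[0.75 \cdot 0.8388 > 0.629 \quad \text{for } t \in \{5, 7\}, \qquad 0.5 \cdot 0.8388 > 0.419 \quad \text{for } t = 6,\]
which are exactly the percentages claimed in Theorem \ref{thm:boom}.

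No substantive obstacle remains, since the heavy lifting is carried by Theorem \ref{thm:main} (together with Theorem \ref{thm:rk0} in the background). The only bookkeeping point is that the intersection of the Selmer-rank condition (an unconditional natural density $> 0.8388$) with the $\Lsha_x$-nonvanishing condition (a conditional natural density inside $S(t)$) has natural density equal to the product; this is immediate because both factors are genuine natural densities rather than merely lower or upper densities, so one may just multiply and assemble the inequality.
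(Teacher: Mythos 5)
Your proposal is correct and matches the paper's (implicit) argument: the paper simply notes the Heath-Brown density $>0.8388$ for $\mathrm{rk}\,\SelT(E^{(n)})=3$ and presents Theorem \ref{thm:boom} as the immediate consequence of multiplying it by the conditional densities $0.75$ (for $t\in\{5,7\}$) and $0.5$ (for $t=6$) from Theorem \ref{thm:main}, with the passage from analytic rank one to algebraic rank one via Gross--Zagier and Kolyvagin already noted in the introduction. Your proposal spells out these steps explicitly but adds nothing beyond what the paper intends.
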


This paper blackboxes the results of Tian, Yuan, and Zhang, but the success of these methods for quadratic twists of the congruent number curve makes a potential generalization of their work appealing. Since $2$-Selmer groups of curves with full two torsion are particularly simple objects, our hope is that their results can be extended to this larger family. We have made some progress on this question in the analytic rank zero case that we hope to give in a subsequent paper, but the analytic rank one case remains elusive. There is more work to be done in this area. However, for this paper, we content ourselves with the one quadratic twist family.
\section{Reduction to linear algebra}
Throughout this section, $n$ will be a positive, squarefree integer, and $E^{(n)}$ will denote the corresponding twist of the congruent number curve. Our first goal is defining the expressions $\Lsha_x(n)$ that appear in the second column and the matrices that appear in the third column of Table \ref{tab:tyz_tab}.

To begin, we define an additive version of the Legendre symbol by
\[\left(\frac{d}{p} \right)_+ := \frac{1}{2}\left(1 - \left( \frac{d}{p} \right)\right)\]
We will always take this to be an element of $\mathbb{F}_2$.

Write the odd part of $n$ as a product $p_1p_2 \dots p_r$ of odd primes. We then define a column vector $\yy = (y_1, \dots, y_r)$ with values in $\mathbb{F}_2$ by
\[y_i :=\left( \frac{-1}{p_i} \right)_+\]
and another column vector $\zz = (z_1, \dots, z_d)$ by
\[z_i := \left( \frac{2}{p_i} \right)_+\]
We also define a $r \times r$ matrix $A$ over $\mathbb{F}_2$ by
\[A_{ij} = \begin{cases} \left( \frac{p_j}{p_i} \right)_+ &\text{ for } i \ne j \\
\sum_{j \ne i} A_{ij} &\text{ for } i = j.\end{cases}\]
For $\mathbf{v}$ any column vector, take $D_{v}$ to be the diagonal matrix so that $(D_v)_{ii} = \mathbf{v}_i$ for all $1 \le i \le d$.

Next, take $g(n) \in \mathbb{F}_2$ to be the order of $2 \text{Cl}(\QQ(\sqrt{-n}))$ mod $2$. It is nonzero if and only if $\QQ(\sqrt{-n})$ has an element in its class group of exact order four. There is an elementary method for calculating $g(n)$ first given by R\'{e}dei \cite{ReRe33} that we summarize here. Via Gauss genus theory, $\text{Cl}(\QQ(\sqrt{-n}))[2]$ consists of those classes corresponding to id\`{e}les that are trivial everywhere except at the ramified primes of the field, or rather the primes dividing the discriminant $D$  of $\QQ(\sqrt{-n})$. Among these consider one of squarefree ideal norm $d | D$; it will be nonprincipal if $d$ is neither $1$ nor $n$. But, by calculating norms, this ideal is the square of another in the class group if and only if
\[x^2  + ny^2 = dz^2\]
is soluble in $\QQ$ for some squarefree $d | D$ other than $d = 1, n$. Via Hasse's principle, this is soluble if the Hilbert symbols $(d, -n)_p$ equal $+1$ at all places $p$. This is equivalent to saying that $d > 0$ and that $(d, -n)_p = +1$ at all primes dividing $D$. But $(a, -n)_p(b, -n)_p = (ab, -n)_p$. If we retake the Hilbert symbol as a map to $\mathbb{F}_2$, so it maps to $0$ and $1$ instead of $+1$ and $-1$, this equation is equivalent to linearity. Then finding such a $d$ is equivalent to solving a set of linear equations, and we can express $g(n)$ as a determinant. We do so in Table \ref{tab:red}, with the notation coming from Section \ref{ssec:first}.
\begin{center}
\begin{table}
\begin{tabular}{c | l}
$n$ mod $4$ & $g(n)$ \\
\hline & \\[-7pt]
1 & $\text{det} \bigg(
				A\big[[r], [r] -\{i\}\big] \,\,\,\,\, \zz  \bigg)$ \\[9pt]
	& \,\,\,\,\, for any $1 \le i \le r$ \\[7pt]
2 & $\text{det} \left( A + D_z \right)$ \\[7pt]
3 & $\text{det} \, A\big[[r] - \{i\}, [r] - \{j\}\big] $ \\[5pt]
    & \,\,\,\,\,  for any $1 \le i, j \le r$
\end{tabular}
\caption{R\'{e}dei matrices for four torsion of class groups of $\QQ{\sqrt{-n}}$ with $n > 0$.}\label{tab:red}
\end{table}
\end{center}

Using $g(n)$, we can define $\Lsha(n)$ and subsequently $\Lsha_x$ for each row $x$ of Table \ref{tab:tyz_tab}.

\begin{defn*}
Take $\Lsha: \mathbb{Z} \rightarrow \mathbb{F}_2$ to be the recursive function defined by
\begin{itemize}
\item $\Lsha(1) = 1$
\item If $n \, \equiv \, 1 \, (8)$ is positive and squarefree, and if $p$ is any prime divisor of $n$,
\begin{equation}
\label{eq:n1rec}
\Lsha(n) = \sum_{\substack{p | d | n \\ d\, \equiv\, 1 \, (8)}} g(d)\Lsha(n/d).
\end{equation}
\item Otherwise, $\Lsha(n) = 0$.
\end{itemize}
\end{defn*}

Now that we understand what the $\Lsha_x$ are, we can precisely formulate the results from Tian, Yuan, and Zhang.
\begin{thm} \label{thm:TYZ} \cite[Theorem 1.1, 1.2]{Tian14}
Take $E$ to be the congruent number curve and assume $n$ is a positive squarefree integer. If $n\, \equiv\, x \,(8)$ for $x \in \{1, 2, 3\}$, we have
\[\Lsha(E^{(n)}) \equiv \Lsha_x(n) \mod 2.\]
Further, if $n$ is $5$, $6$, or $7$ mod $8$, and if the row $x$ agrees with $n$ mod $8$, then the analytic rank of $E^{(n)}$ is exactly one if $\Lsha_x(n)$ is nonzero.
\end{thm}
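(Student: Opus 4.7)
The plan is to treat the two assertions in parallel by combining the Waldspurger formula (for the analytic rank zero case) with the Gross--Zagier formula (for the analytic rank one case). In both cases the critical $L$-value (resp.\ its derivative) will be reinterpreted as an arithmetic invariant that can be unwound via genus theory, at which point the matrix $A$ together with the vectors $\yy$ and $\zz$ of R\'edei/Monsky type appear naturally.

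For $n \equiv 1, 2, 3 \, (8)$, I would first invoke Waldspurger's formula to express $L(1, E^{(n)})/\Omega(E^{(n)})$ as the square of a Fourier coefficient of a half-integral weight modular form --- essentially a Tunnell-type theta series associated with the congruent number curve and the chosen residue class. After accounting for Tamagawa factors and torsion, $\Lsha(E^{(n)})$ is an integer up to the conjectural squaring on the Sha side, so the question becomes its parity. I would prove the recursion~(\ref{eq:n1rec}) defining $\Lsha(n)$ by showing that this Fourier coefficient mod $2$ satisfies the same recursion. The mechanism is R\'edei reciprocity and Gauss genus theory: each summand $g(d)\Lsha(n/d)$ should arise from a decomposition of the theta series over ideals of norm $n$ in $\QQ(i)$, and the factor $g(d)$ detects the existence of an order-four element in $\text{Cl}(\QQ(\sqrt{-d}))$ through precisely the determinantal formulas of Table~\ref{tab:red}.

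For $n \equiv 5, 6, 7 \, (8)$ the global root number of $L(s, E^{(n)})$ is $-1$, so the analytic rank is odd, and Gross--Zagier gives
\[L'(1, E^{(n)}/K) \;\doteq\; \widehat{h}(y_K)\]
for a suitable imaginary quadratic field $K$ and Heegner point $y_K \in E^{(n)}(K)$. Here the strategy is to show that $\Lsha_x(n)\ne 0$ implies $y_K$ is non-torsion, and hence the analytic rank is exactly one. Concretely, the non-vanishing of $\Lsha_x(n)$ amounts to a determinant of the form $\det\bigl(\begin{smallmatrix} M & \vv \\ \vv^{\top} & 0\end{smallmatrix}\bigr)$ being non-zero over $\FFF_2$ (as foreshadowed in the introduction), where $\vv$ encodes the auxiliary prime used to construct the Heegner system. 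I would verify that this determinant controls the $2$-descent class of $y_K$ in $E^{(n)}(K)/2E^{(n)}(K)$ by running an induction parallel to the one in the rank-zero case, now over divisors $d \mid n$ with $d \equiv 1 \, (8)$, and tracking how the $2$-part of $y_K$ transforms under Kolyvagin-type norm relations between Heegner points at different anticyclotomic levels.

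The hard part will be matching the explicit $2$-adic invariants of the Heegner point (resp.\ the Waldspurger coefficient) to the specific combinatorial expressions $\Lsha_x(n)$. Two concrete inputs are required: (i) a version of the Waldspurger formula for $E^{(n)}$ that realizes $L(1,E^{(n)})/\Omega$ as an \emph{honest} square of a class-group-theoretic quantity mod~$2$, not merely an abstract one; and (ii) in the rank-one regime, compatible explicit formulas for Heegner points coming from CM points on Shimura curves for indefinite quaternion algebras ramified at precisely the places dictated by $n \bmod 8$. Once these geometric and analytic formulas are in place, the passage to linear algebra over $\FFF_2$ should be essentially formal, driven by R\'edei genus theory and the Monsky description of $\SelT(E^{(n)})$ recalled above.
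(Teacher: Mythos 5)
This statement is not proved in the paper at all: it is quoted verbatim from Tian--Yuan--Zhang \cite{Tian14} (their Theorems 1.1 and 1.2) and used as a black box, so the only ``proof'' the paper offers is the citation. Your proposal, by contrast, sets out to reprove those results, and what you sketch is essentially the strategy of \cite{Tian14} itself: Waldspurger in the root-number $+1$ classes, Gross--Zagier and Heegner points on quaternionic Shimura curves in the root-number $-1$ classes, and an induction over divisors $d \mid n$ that produces the genus-theory factors $g(d)$. So the outline is pointed in the right direction, but it is an outline, not a proof.

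The genuine gap is that the two inputs you yourself flag as ``required'' are precisely the hard content of \cite{Tian14} and are nowhere established in your argument. In the rank-zero case you need the explicit Waldspurger/Tunnell-type identity showing that the normalized $L$-value, reduced mod $2$, satisfies the same divisor recursion \eqref{eq:n1rec} that defines $\Lsha$; asserting that ``each summand $g(d)\Lsha(n/d)$ should arise from a decomposition of the theta series over ideals of norm $n$'' is exactly the statement to be proved, and it requires the explicit period computations and the induction method (in the style of Zhao and Tian) that occupy most of the source paper. Likewise, in the rank-one case, the claim that nonvanishing of $\Lsha_x(n)$ forces the Heegner point $y_K$ to be non-torsion depends on explicit norm-compatibility and $2$-divisibility formulas for CM points on the relevant Shimura curves, which you invoke (``Kolyvagin-type norm relations,'' ``compatible explicit formulas for Heegner points'') but do not supply; without them the link between the $\FFF_2$-determinant and the class of $y_K$ in $E^{(n)}(K)/2E^{(n)}(K)$ is unsubstantiated. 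As written, the proposal is a plausible reconstruction of the cited authors' program rather than a proof; for the purposes of this paper the correct move is simply to cite \cite{Tian14}, as the text does, and reserve new argument for the determinant reformulation (Theorem \ref{thm:tab_power}) and the density analysis, which are the paper's actual contributions.
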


The main contribution of this paper is a reinterpretation of these formulae as determinants of matrices.
\begin{thm}
\label{thm:tab_power}
With $A$, $\yy$, and $\zz$ defined from $n$ as above, the second and third columns of Table \ref{tab:tyz_tab} are equal in the rows corresponding to $n$ mod $8$.
\end{thm}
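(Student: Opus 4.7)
The plan is to prove Theorem \ref{thm:tab_power} by induction on $r$, the number of odd prime divisors of $n$, reducing each row of Table \ref{tab:tyz_tab} to a Laplace expansion of the third-column matrix determinant that matches the structure of the corresponding $\Lsha_x(n)$.

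I would start with the row for $n \equiv 1 \pmod 8$, where $\Lsha_1(n) = \Lsha(n)$ is given explicitly by the recursion \eqref{eq:n1rec}. Fix a prime $p = p_i$ and expand the third-column matrix via a block Laplace expansion indexed by subsets $S \subseteq [r]$ containing $i$: each $S$ corresponds bijectively to a divisor $d = \prod_{j \in S} p_j$ of $n$ containing $p$. The factor supported on $S$ should equal $g(d)$ by the R\'edei Table \ref{tab:red}, and the factor supported on $[r] \setminus S$ should equal $\Lsha(n/d)$ by induction. The divisor restriction $d \equiv 1 \pmod 8$ in \eqref{eq:n1rec} should emerge automatically: because $(\mathbb{Z}/8)^{\times} = \langle -1, 5 \rangle \cong \mathbb{F}_2^2$ is parametrized by the pair $(y_j, z_j) \in \mathbb{F}_2^2$ attached to each odd prime $p_j$, one has $d \equiv 1 \pmod 8$ iff $\sum_{j \in S} y_j = 0$ and $\sum_{j \in S} z_j = 0$ in $\mathbb{F}_2$; these two linear constraints should appear in the Laplace expansion as the vanishing condition of block determinants bordered by slices of $\yy$ and $\zz$.

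For the other rows ($x = 2, 3, 5a, 5b, 6, 7a, 7b$), the same Laplace framework applies, but with modifications reflecting the different mod-$8$ class of $n$ and, for the rows $5a, 5b, 7a, 7b$, the extra bordering by a vector $\mathbf{v}$ that implements the rank-one condition from Tian, Yuan, and Zhang. In each case, the expansion of the third-column matrix will produce a sum of $g(d) \Lsha_{x'}(n/d)$ over appropriate divisors $d$, where the case of the R\'edei table used for $g(d)$ is selected by $d \bmod 4$ and the row $x'$ matches $(n/d) \bmod 8$. An efficient packaging of the argument may be to verify a single master identity: that the family of third-column matrices, viewed as functions of $n$, satisfies the same recursions as the $\Lsha_x$. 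Alternatively, for the rows with $n \not\equiv 1 \pmod 8$, one can bootstrap from the $x = 1$ case by multiplying $n$ by a suitable small auxiliary factor to land in the $\equiv 1 \pmod 8$ case, then relate the determinant for $n$ to that for the larger number via row and column operations.

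The main obstacle is matching the three cases of the R\'edei table --- plain minor deletion, bordering by $\zz$, and adding $D_z$ to $A$ --- with the three residues of $d \bmod 4$ that arise in the Laplace expansion, while simultaneously imposing the $d \bmod 8$ constraint through the bordering by $\yy$ and $\zz$. The combinatorics of this matching, particularly for the sub-rows $5a, 5b, 7a, 7b$ where the additional vector $\mathbf{v}$ distinguishes between two parallel determinants, is where the bulk of the calculation will live. Sign issues should be mild since everything happens in $\mathbb{F}_2$, so the real challenge is isolating the correct block structure of each third-column matrix and showing that every term in its expansion corresponds to exactly one summand of $\Lsha_x(n)$.
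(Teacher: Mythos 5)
Your high-level target is the right one and matches the paper's strategy: one shows that each third-column determinant satisfies the same divisor-convolution recursion as the corresponding $\Lsha_x$, with the congruence conditions on $d$ (e.g.\ $d \equiv 1 \bmod 8$ iff $\sum_{j \in S} y_j = 0$ and $\sum_{j \in S} z_j = 0$) emerging from vanishing conditions on the bordered sub-determinants, and with the $S$-factor identified with $g(d)$ via Table \ref{tab:red}. That part of your outline is exactly what Proposition \ref{prop:main_rec} and the subsequent case analysis accomplish.

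The gap is in the mechanism you propose for producing that recursion. A ``block Laplace expansion indexed by subsets $S \subseteq [r]$'' does not exist in the form you need: the entries $A_{ij}$ with $i \in S$, $j \notin S$ couple the two blocks, so $\det M_x$ is not a sum over $S$ of products of complementary minors. Moreover, the sub-determinants that actually appear in the true identity are not ordinary submatrices of $M_x$ but the re-normalized ones $M_x(A,\zz)[S] = M_x(\Arows[S,S],\zz[S])$, in which the diagonal of $A$ is recomputed so that the rows of the restricted matrix again sum to zero; no Laplace expansion produces that re-normalization. Establishing the convolution identity \eqref{eq:main_rec} is the heart of the matter, and the paper has to work for it: it expands the determinant of the symmetric matrix $M_1$ as a sum over involutions $\sigma$ (pairing $\sigma$ with $\sigma^{-1}$), introduces an auxiliary function $\tau$ to resolve the diagonal entries of $A$ as sums over their rows, defines for each pair $(\sigma,\tau)$ a ``closure'' set $\mathscr{B}(\sigma,\tau)$ whose structure (a path in an auxiliary graph) splits each term into an $S$-part and an $S'$-part, and runs an induction on the degree of monomials $z_1 \cdots z_k$ --- crucially using the hypothesis $A_{ij} + A_{ji} = y_i y_j$ and the vanishing of $\det M_1(A,\zz)[S_1]$ when $\zz[S_1] = 0$ to show the spurious terms cancel mod $2$. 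None of this is supplied or replaced by your outline, and your fallback suggestion (bootstrapping the rows with $n \not\equiv 1 \bmod 8$ by multiplying $n$ by an auxiliary factor and doing row/column operations) does not obviously work either, since changing $n$ changes $A$, $\yy$, $\zz$ globally and the bordered matrices for different residues have different shapes; the paper instead derives each remaining row from a separate closure-based convolution identity against the already-proved $M_1$ (and $M_2$) cases, using the auxiliary matrices $O$, $N$, $P$, $T$. So as written the proposal names the identity to be proved but not a viable proof of it.
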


\begin{center}
\begin{table}
\label{tab:tyz_tab}
\begin{tabular}{c | l | l}
$n$ mod 8 & Tian-Yuan-Zhang expression $\Lsha_x$ & Determinant reformulation $\det M_x$\\
\hline & &\\[-6pt]
$1$ & $\Lsha(n)$ &\,\, $\left|\begin{array}{ll}
				A + A^{\top} & A^{\top} \\
				A                  & D_{z} \end{array}\right|$ \\[14pt]
$2$ &
		$\sum\limits_{\substack{d | n \\ d\, \equiv\, n \, (16)}} g(d)\Lsha(n/d)$
			     & \,\,$\left|\begin{array}{ll}
				A + A^{\top} + D_{y + z} & A^{\top} \\
				A                  & D_{z} \end{array}\right|$ \\
$3$ &
		$\sum\limits_{\substack{d | n \\ d\, \equiv\, 3 \, (8)}} g(d)\Lsha(n/d)$
			     & \,\,$\left|\begin{array}{lll}
				A + A^{\top}  & A^{\top}  & \yy \\
				A                  & D_{z}       & 0\\
				\yy^{\top}      & 0              & 0\end{array}\right|$ \\[25pt]
$5$ (a) &
		$\sum\limits_{\substack{d | n \\ d\, \equiv\, 5 \, (8)}} g(d)\Lsha(n/d)$
			     & \,\,$\left|\begin{array}{lll}
				A + A^{\top}  & A^{\top}  & \yy + \zz \\
				A                  & D_{z}       & 0\\
				\yy^{\top} + \zz^{\top}      & 0              & 0\end{array}\right|$ \\[25pt]
\,\,\, (b) &
		$\sum\limits_{\substack{d_0d_1 | n \\ d_0\, \equiv\, 7 \, (8) \\ d_1\, \equiv\, 3 \, (8)}} g(d_0)g(d_1)\Lsha(n\big/d_0d_1)$
			     & \,\,$\left|\begin{array}{lll}
				A + A^{\top}  & A^{\top}  & 0 \\
				A                  & D_{z}      & \yy\\
				0                  & \yy^{\top}& 0          \end{array}\right|$ \\[40pt]
$6$ &
		$\begin{array}{l} \sum\limits_{\substack{d_0d_1 | n \\ d_0\, \equiv\, 7n \, (16) \\ d_1 \, \equiv \, 7 \, (8)}} g(d_0)g(d_1)\Lsha(n\big/d_0d_1)  \\
						+\sum\limits_{\substack{d | n \\ d\, \equiv\, n \, (16)}} g(d)\Lsha(n/d) \end{array}$ 
			     & \,\,$\left|\begin{array}{lll}
				A + A^{\top}+ D_{y + z}  & A^{\top}  & \yy  \\
				A                  & D_{z}       & \yy\\
				\yy^{\top}  & \yy^{\top}      & 0 \end{array}\right|$\\[35pt]

$7$ (a) &
		$\sum\limits_{\substack{d | n \\ d\, \equiv\, 7 \, (8)}} g(d)\Lsha(n/d)$
			     & \,\,$\left|\begin{array}{llll}
				A + A^{\top}  & A^{\top}  & \yy + \zz & 0 \\
				A                  & D_{z}       & 0 & \yy \\
				\yy^{\top} + \zz^{\top}      & 0  &0            & 0 \\
				0 & \yy^{\top} &0 &0 \end{array}\right|$ \\[40pt]
\,\,\, (b) &
		$\sum\limits_{\substack{d_0d_1 | n \\ d_0\, \equiv\, 5 \, (8) \\ d_1 \, \equiv \, 3\,(8)}} g(d_0)g(d_1)\Lsha(n\big/d_0d_1)$
			     & \,\,$\left|\begin{array}{llll}
				A + A^{\top}  & A^{\top}  & \yy + \zz & \yy \\
				A                  & D_{z}       & 0 & 0 \\
				\yy^{\top} + \zz^{\top}      & 0  &0            & 0 \\
				\yy^{\top} & 0 &0 &0 \end{array}\right|$ \\[25pt] 
\end{tabular}
\vspace*{6mm}
\caption{Matrix forms for formulae from \cite{Tian14}.}
\end{table}
\end{center}

We will refer to the matrix in the third column row $x$ of this table by $M_x$. Given Theorem \ref{thm:tab_power}, it is straightforward to prove Theorem \ref{thm:rk0}.
\begin{proof}[Proof of Theorem \ref{thm:rk0}]

Per Monsky's calculations in \cite{Heat94},
\begin{itemize}
\item $\text{rk}\left(\SelT(E^{(n)})\right) = 2 + \text{crnk}(M_1)$ for $n \equiv 1 \, (8)$,
\item $\text{rk}\left(\SelT(E^{(n)})\right) = 2 + \text{crnk}(M_2)$ for $n \equiv 2 \, (8)$, and
\item $\text{rk}\left(\SelT(E^{(n)})\right) = 1 + \text{crnk}(M_3\big[[2r], [2r]\big])$ for $n \equiv 3 \, (8)$, with the bracket notation defined below.
\end{itemize}

With Theorem \ref{thm:tab_power} and Theorem \ref{thm:TYZ}, we immediately get the theorem for $n \equiv 1, 2 \, (8)$. In the final case, we see that the column space of $M_3[[2r], [2r]]$ is contained in the set of $\mathbf{v} = (v_1, \dots, v_{2r})$ with $\sum_{i=1}^r v_i = 0$. $\mathbf{y}$ is not in this space, and similarly $\mathbf{y}^{\top}$ is not in the corresponding row space, so $M_3[[2r], [2r]]$ (see notation below) has rank two less than $M_3$. This is enough to give the result.
\end{proof}
The rest of this section will be spent proving Theorem \ref{thm:tab_power}. Subsequently, in Section \ref{sec:density}, we will push through the necessary analysis of the matrices $M_x$ to prove Theorem \ref{thm:main}. Theorem \ref{thm:tab_power} splits into eight cases, but the analysis of the first case will take as many words as the remaining seven cases combined. We turn to it now.

\subsection{The first case of Theorem \ref{thm:tab_power}}
\label{ssec:first}
We need some technical definitions.

\begin{defn*}
If $m$ is a positive integer, take $[m]$ to be the set of integers $\{1, 2, \dots, m\}$.
\end{defn*}
\begin{defn*}
Suppose $B = (B_{ij})$ is a $m \times m$ matrix over $\mathbb{F}_2$. Let $S = (s_1, \dots, s_{m'})$ and $C = (c_1, \dots, c_{m''})$ be two subsets of $[m]$, with $s_i < s_j$ and $c_i < c_j$ for $i < j$. We define $B[S, C]$ to be the $m' \times m''$ matrix defined by
\[(B[S, C])_{ij} = B_{s_ic_j}.\]
If $m' = m''$, we denote by $B_{\text{Rows}}[S, C]$ the variant of this normalized to have rows summing to zero, namely
\[(B_{\text{Rows}}[S, C])_{ij}  = \begin{cases} B_{s_ic_j} &\mbox{if } i \ne j \\  \sum_{k \ne i} B_{s_ic_k} &\mbox{if } i = j.\end{cases}\]
If $\mathbf{v} = (v_1, \dots, v_m)$ is a $m$ dimensional column vector, $\mathbf{v}[S]$ is defined as $(v_{s_1}, \dots, v_{s_{m'}})$.
\end{defn*}

\begin{defn*}
Given a square matrix $M$ defined as a function $M(B, \mathbf{v}_1, \dots, \mathbf{v}_t)$ with $B$ an $m \times m$ matrix with rows summing to zero and with the $\mathbf{v}_i$ all $m$ dimensional vectors, and given a subset $S \subseteq [m]$, define
\[M(B, \mathbf{v}_1, \dots, \mathbf{v}_t)[S] := M(\Arows[S, S], \mathbf{v}_1[S], \dots, \mathbf{v}_t[S]).\] 

\end{defn*}
\begin{rmk}
Think of $M_x$ as a function $M_x(A, \zz)$, with $A$ corresponding to the odd part $p_1 p_2 \dots p_r$ of $n$. If $d = p_{i_1} \dots p_{i_{r'}}$ is a positive integer divisor of the odd part of $n$,  we have that the $M_x$ that corresponds to $d$ is $M_x(A, \zz)[S]$, where $S = \{i_1, \dots, i_{r'}\}$. This is the rationale for this definition.
\end{rmk}

The $0 \times 0$ matrix is nonsingular, so to show that $\Lsha(n) = \det M_1$ for all $n$, we just need to show that \eqref{eq:n1rec} is satisfied with $\det M_1$ replacing $\Lsha$. The proper way to formulate this is as follows.
\begin{prop}
\label{prop:main_rec}
Take $\yy$ and $\zz$ to be any $r$ dimensional column vectors over $\mathbb{F}_2$, and take $A$ to be an $r \times r$ matrix over $\mathbb{F}_2$ whose rows sum to zero and which satisfies
\[A_{ij} + A_{ji} = y_iy_j\]
for $i$ and $j$ not equal.

Consider $M_1$ as a function of $A$ and $\zz$. Define a new function
\[Q(A, \zz) = \bigg(A\big[[r], [r-1]\big] \,\,\,\,\, \zz  \bigg).\]
Then we have
\begin{equation}
\label{eq:main_rec}
 \det M_1(A, \zz)
\end{equation}
\[ = \sum_{\substack{S \subseteq [r]\\ 1 \in S}} \left(1 + \sum_{i \in S} y_i \right)\left(1 + \sum_{i \in S} z_i\right) \det Q(A, \zz)[S] \,\cdot\, \det M_1(A, \zz)[S'].\]
In this expression, $S'$ denotes the complement of $S$.
\end{prop}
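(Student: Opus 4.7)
The plan is to verify the identity by a direct expansion of $\det M_1$, matching it against the right-hand side of \eqref{eq:main_rec}. The first move is a Laplace expansion along the last $r$ rows of $M_1$ (those containing $A$ and $D_z$). Because $D_z$ is diagonal, the bottom-block submatrix in any term of this expansion factors cleanly: parametrising the column choice by a pair of subsets $V, T\subseteq [r]$ with $|T|+|V|=r$ (where $V$ labels which $z_i$-columns are chosen and $T$ labels which $A$-columns are chosen from the left block), the bottom factor is exactly $\prod_{i\in V}z_i\cdot \det A\bigl[[r]\setminus V,\,T\bigr]$. This reduces $\det M_1$ to a double sum over such $(T,V)$, weighted by a complementary $r\times r$ top-determinant assembled from $A+A^\top$ and $A^\top$.

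The second move exploits the rank-one-plus-diagonal identity $A+A^\top=\yy\yy^\top+D_y$, which follows from the hypotheses $A_{ij}+A_{ji}=y_iy_j$ (off-diagonal) together with the row-sum-zero condition (forcing the diagonal of $A+A^\top$ to vanish). Each column of $(A+A^\top)[[r],S]$ then takes the form $y_j(\yy+\e_j)$, so multilinear expansion of the top determinant in these $|S|$ columns produces $2^{|S|}$ sub-terms, of which only those with at most one column equal to $\yy$ survive (two $\yy$-columns give a zero determinant by column repetition). Writing $\sigma_y^S=\sum_{i\in S}y_i$ and $\sigma_z^S=\sum_{i\in S}z_i$, this should assemble, after collecting coefficients, into a factor $(1+\sigma_y^S)\det Q[S]$, because the $|S|$ single-$\yy$-column minors collectively compute $\sigma_y^S$ times a $Q$-like determinant.

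The third move matches what remains against $\det M_1[S']$. Running the same two-step expansion in reverse for $\det M_1[S']$ rewrites both sides as sums of monomials in the entries of $A$, $\yy$, $\zz$; the matching requires the structural identities $A\mathbf{1}=0$ and $A^\top\mathbf{1}=\yy(1+\sum y_i)$ in order to pass between the natural minor $A[S',S']$ and the row-normalised $\Arows[S',S']$ appearing inside $M_1[S']$ (the two differ on the diagonal by $\sum_{k\in S}A_{ik}$). Absorbing this diagonal discrepancy is what should simultaneously produce the $\det M_1[S']$ factor and the second prefactor $(1+\sigma_z^S)$.

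The main obstacle is the asymmetry in the statement: the left-hand side is manifestly symmetric under permutations of $[r]$, while the right-hand side singles out the index $1$. I expect this to emerge from an involution that pairs every contribution with $1\notin S$ to a partner of equal weight, so those pairs cancel in $\mathbb{F}_2$; the surviving terms are exactly those with $S\ni 1$. Constructing this involution so that it is compatible with the expansion above, and simultaneously cleanly producing the prefactor $(1+\sigma_y^S)(1+\sigma_z^S)\det Q[S]\cdot\det M_1[S']$ for each surviving $S$, is what I expect to form the technical heart of the argument.
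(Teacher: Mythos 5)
Your proposal takes a genuinely different route from the paper's, but it has gaps serious enough that I do not think it would close without a fundamentally new idea.

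The paper's proof works entirely via a polynomial-in-$\zz$ comparison: fixing $A$, it shows that $\det M_1$ and the right-hand side of \eqref{eq:main_rec} have the same $z_{i_1}\cdots z_{i_k}$ coefficient for every monomial, by induction on $k$. The central device is the expansion $\det M = \sum_{\sigma^2=1}\prod_i M_{i\sigma(i)}$ over involutions (valid because $M_1$ is symmetric), together with a \emph{closure} construction that partitions each contributing involution $\sigma$ (augmented by a bookkeeping function $\tau$ recording which off-diagonal entry is used when $\sigma$ hits a diagonal row-sum of $A$) into a piece supported on an index set $\mathscr{B}_S$ and its complement. The index $1$ enters only because the closure is anchored at row $r+1$; no cancellation is needed. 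The base cases $k\le 2$ are computed directly, and the passage from $(1+\sum_S y_i)(1+\sum_S z_i)\det Q[S]$ to $\det M_1[S]$ for $|S\cap[k]|=2$ reuses exactly those base cases. This is the engine that makes the $\det M_1[S']$ factor appear.

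Your approach — Laplace expansion along the bottom $r$ rows, then multilinear expansion of $A+A^\top=\yy\yy^\top+D_y$ — gets the first step right: the bottom minor does factor as $\prod_{i\in V}z_i\det A\bigl[[r]\setminus V,T\bigr]$, and the columns of $A+A^\top$ do have the form $y_j(\yy+\e_j)$. But from there I see three genuine problems. First, the Laplace expansion produces a top minor built from columns of $A+A^\top$ and $A^\top$, not a copy of $\det M_1[S']$; nothing in the decomposition recreates the full $2|S'|\times 2|S'|$ block structure $\begin{pmatrix}A+A^\top & A^\top\\ A & D_z\end{pmatrix}$ restricted to $S'$, and you would need a separate argument (something playing the role of the paper's closure) to reassemble it. Second, the $(1+\sum_{i\in S}z_i)$ prefactor does not come from the diagonal discrepancy $\sum_{k\in S}A_{ik}$ between $A[S',S']$ and $\Arows[S',S']$ as you suggest — that discrepancy is a sum of entries of $A$, not of $\zz$, and it is already accounted for inside the definition of $M_1[S']$. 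Third, and most importantly, your hoped-for involution pairing off the $1\notin S$ contributions is not how the $1\in S$ restriction arises in the paper, and I do not believe such an involution exists in a form compatible with the rest of your expansion: the restriction reflects that $g(d)\Lsha(n/d)$ is summed over divisors $d$ containing a fixed prime $p_1$, a constraint that in the paper is hard-wired by anchoring the closure at $r+1$, not produced by cancellation. Without that mechanism — or an explicit replacement for it — the third move of your outline is a conjecture rather than a proof step, and it is precisely where the difficulty lives.
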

The rest of Section \ref{ssec:first} will cover the proof of this proposition.

Write $F(A, \zz)$ for the right hand side of \eqref{eq:main_rec}, and take $M = M_1$. Notice that, if we fix $A$, we can think of $F$ and $\det M$ as polynomials in $\zz$. With this mindset, we will prove the lemma by proving $F(A, \zz)$ and $\det M(A,  \zz)$ have the same $P_k(\zz) = z_{i_1} \dots z_{i_k}$ component for any set of indices $(i_1, \dots, i_k)$. We do this inductively on $k$. Notice that, by permuting the matrix $A$ and the vectors $\yy$ and $\zz$, we can without loss of generality assume that $i_j = j$ for each $j$. Further, notice that $F(A, (z_1, \dots, z_r))$ and $F(A, (z_1, \dots, z_k, 0, \dots, 0))$ have the same $z_1\dots z_k$ component, and similarly for $\det M(A, (z_1, \dots, z_r))$ and $\det M(A, (z_1, \dots, z_k, 0, \dots, 0))$.

We first prove that the constant and $z_1$ coefficients are equal to zero for $F$ and $\det M$ if $r \ge 1$. Take $z_i = 0$ for $i > 1$; we wish to prove $F$ and $\det M$ are both zero. For $M$, we first note that, as $M$ is symmetric, we can calculate
\[\det M = \sum_{\sigma} \prod_i (M)_{i \sigma(i)} = \sum_{\sigma, \sigma^2 = 1} \prod_i M_{i \sigma(i)}\]
where the sum is over all permutations of $[2d]$. This result comes from pairing the $\sigma$ term with the $\sigma^{-1}$ whenever the two are distinct.

But this involution $\sigma$ must fix an even number of elements from $[2d]$. That is, $\sigma$ hits an even number of diagonal entries of $M$. In particular, the expression of the determinant as a polynomial in the $z_i$ only has terms of even degree. As $A$ is singular, $\det M = 0$ when all the $z_i$ are $0$, and since the determinant polynomial only has terms of even degree, $\det M$ must also be zero if $z_i = 0$ for $i > 1$.

At the same time, if $z_i = 0$ for $i > 1$, we get
\[F(A, \zz) = (1 + z_1)\left(1 + \sum_{i} y_i\right) \det Q(A, \zz).\]
But $\det Q(A, \zz)$ is divisible by $z_1$, so this expression is divisible by $z_1(1 + z_1) = 0$.

We next prove the result for $k = 2$, so take $z_i = 0$ for $i > 2$. If $\sum_i y_i = 1$, then both sides of \eqref{eq:main_rec} are zero, so assume $\sum_i y_i = 0$. Choose any $j \le r$. We then have
\[ F(A, \yy, \zz) = (1 + z_1 + z_2)\det Q(A, \zz)\]
\[ = (1 + z_1 + z_2) \left(z_1 \det A\big[[r] - \{1\}, [r] - \{j\}\big] + z_2 \det A\big[[r] - \{2\}, [r] - \{j\}\big]\right).\]
Then the coefficient of $z_1z_2$ in $F(A, \zz)$ is 
\[\det A\big[[r] - \{1\}, [r] - \{j\}\big] + \det A\big[[r] - \{2\}, [r] - \{j\}\big].\]
The sum of the rows of $A$ is $\yy^{\top}$. Thus, using multilinearity of determinant, we see that this coefficient is equal to
\[ \det \left( \begin{array}{c} \left(\yy\big[[r]-\{j\}\big]\right)^{\top} \\ A\big[[r] - \{1, 2\}, [r] - \{j\}\big] \end{array} \right) = \sum_{i \ne j} y_i \det A\big[[r] - \{1, 2\}, [r] - \{i, j\}\big].\]
If $y_i = 1$, there are an odd number of nonzero $y_j$ with $j \ne i$, so we can unfix $j$ to get that the coefficient is
\[\sum_j \sum_{i \ne j} y_iy_j\det A\big[[r] - \{1, 2\}, [r] - \{i, j\}\big]\]

On the other side, we have
\[\det M =  \sum_{\sigma, \sigma^2 = 1} \prod_i M_{i \sigma(i)}.\]
For a permutation $\sigma$ to give a nonzero $z_1z_2$ term, we must have $\sigma(r+1) = r+1$ and $\sigma(r+2) = r+2$, with $\sigma(r+k) \le r$ for all $k > 2$. Then there are precisely two distinct integers $i, j \le r$ so $\sigma(i) \le r$ and $\sigma(j) \le r$. To avoid the zeros on the diagonal, then, we must have $\sigma(i) = j$. For $M_{ij}$ to be nonzero, we also need $y_iy_j = 1$. We then can write
\[\det M = z_1z_2 \sum_j \sum_{i \ne j} y_iy_j\det A\big[[r] - \{1, 2\}, [r] - \{i, j\}\big],\]
which agrees with $F$.

At this point, we have established the proposition for terms with $k \le 2$. We now prove it for a general monomial of length $k$. Thus assume $\zz = (z_1, \dots, z_k, 0, \dots, 0)$. Let $S$ be a subset of $[r]$ containing $1$. If $S$ contains more than two elements of $[k]$, and if $S'$ denotes the complement of $S$ in $[r]$, then we know that 
\[\left(1 + \sum_{i \in S} y_i \right) \left(1 + \sum_{i \in S} z_i \right) \det Q(A, \zz)[S] \det M(A, \zz)[S']\]
cannot have a nontrivial $z_1\dots z_k$ component, as $(1 + \sum_{i \in S} z_i)\det F$ is quadratic in the variables from $\zz[S]$, so any monomial component of it can reference at most $2$ elements of $\zz[S]$. We can thus restrict the sum to being over $S$ with $|S \cap [k]| = 2$, at which point we can use our proof of the proposition for $k \le 2$ to rewrite $\det Q(A, \zz)[S]$ in terms of $M(A, \zz)[S]$. Then $F(A, \zz)$ has the same $z_1 \dots z_k$ term as
\begin{equation}
\label{eq:FfromM}
\sum_{\substack{1 \in S \subseteq [d] \\ |S \cap [k]| = 2}} \det M(A, \zz)[S] \det M(A, \zz)[S'].
\end{equation}

Then, to finish the proof of Proposition \ref{prop:main_rec}, we need to prove the following proposition.

\begin{prop}
\label{prop:F_rec}
The $z_1 \dots z_k$ term of $\det M(A,  \zz)$ equals the $z_1 \dots z_k$ term of \eqref{eq:FfromM}.
\end{prop}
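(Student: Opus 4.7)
My plan is to re-express both sides via Pfaffians of related symmetric $\mathbb{F}_2$-matrices and prove the resulting Pfaffian identity combinatorially, using the row-sum-zero hypothesis on $A$ to bridge a small discrepancy between the matrices involved. Since each $z_i$ appears in $M(A, \zz)$ only at the diagonal position $(r+i, r+i)$, the multilinear expansion
\[
\det\Bigl(M_0 + \sum_i z_i\, e_{r+i} e_{r+i}^\top\Bigr) = \sum_{K \subseteq [r]} \Bigl(\prod_{i \in K} z_i\Bigr) \det M_0\bigl[[2r] \setminus \{r + i : i \in K\}\bigr]
\]
(with $M_0 := M(A, \mathbf{0})$, and the bracket denoting the indicated principal submatrix) identifies $[z_1 \cdots z_k] \det M(A, \zz)$ with $\det N$, where $N = M_0\bigl[[2r] \setminus \{r+1, \dots, r+k\}\bigr]$. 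Applying the same expansion to each factor in $\det M(A, \zz)[S] \cdot \det M(A, \zz)[S']$ shows that the $z_1 \cdots z_k$ coefficient of the product equals $\det N_S \cdot \det N_{S'}$ for analogous submatrices $N_S, N_{S'}$ of $M(A, \zz)[S]_0$ and $M(A, \zz)[S']_0$. Because the top-left block $A + A^\top$ has zero diagonal over $\mathbb{F}_2$ and the bottom-right block of $M_0$ is zero, each of $N, N_S, N_{S'}$ is symmetric with vanishing diagonal, so $\det = \mathrm{Pf}^2$; combining this with $x^2 = x$ for $x \in \mathbb{F}_2$ and $(a+b)^2 = a^2 + b^2$ reduces the claim to the Pfaffian identity
\[
\mathrm{Pf}(N) = \sum_{\substack{1 \in S \subseteq [r]\\ |S \cap [k]| = 2}} \mathrm{Pf}(N_S)\, \mathrm{Pf}(N_{S'}).
\]

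Next, expand each Pfaffian as a sum over perfect matchings of its index set. For each matching $\pi$ of $I := [r] \cup \{r + i : k < i \le r\}$, associate a multigraph $G_\pi$ on $[r]$ with an edge $\{a, b\}$ for every horizontal pair $\{a, b\} \in \pi$ and an edge $\{t, i\}$ for every vertical pair $\{t, r+i\} \in \pi$ (this is a self-loop when $t = i$). A degree count forces $G_\pi$ to be a disjoint union of $k/2$ paths with both endpoints in $[k]$, together with some number $c$ of cycles and $s$ of self-loops, all supported on $\{k+1, \dots, r\}$. A pair $(\pi_S, \pi_{S'})$ on the RHS reassembles into $\pi$ precisely when the partition $(S, S')$ respects the connected components of $G_\pi$; the constraints $1 \in S$ and $|S \cap [k]| = 2$ then force the path containing $1$ into $S$ and every other path into $S'$, while the cycles and self-loops may be freely assigned, yielding $2^{c+s}$ respecting partitions. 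The entries $N_{ab}, (N_S)_{ab}, (N_{S'})_{ab}$ coincide on every non-self-pair $\{a, b\}$, while at the self-pair $\{i, r+i\}$ we have $N_{i, r+i} = A_{ii} = \sum_{\ell \ne i} A_{i\ell}$ but $(N_S)_{i, r+i} = \sum_{\ell \in S,\, \ell \ne i} A_{i\ell}$ and $(N_{S'})_{i, r+i} = \sum_{\ell \in S',\, \ell \ne i} A_{i\ell}$, so these two partial sums add to $A_{ii}$ by the row-sum-zero hypothesis.

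The crux of the proof is to show that, after summing RHS contributions over respecting partitions and expanding each self-pair partial sum into its constituent $A_{i\ell}$-terms, the cross-terms between different self-loops redistribute contributions among matchings with varying cycle and self-loop structure so that everything telescopes exactly to $\mathrm{Pf}(N)$. The hard part will be the bookkeeping: the RHS contributions from a single $\pi$ mix with those of other matchings, so no direct matching-by-matching bijection works. The cleanest way to organize things is to fix a monomial $m$ in $A$'s off-diagonal entries together with a subset $U \subseteq [r]$ and check that the coefficient of $m \cdot \prod_{i \in U} y_i$ is the same on both sides: here the $|S \cap [k]| = 2$ constraint limits how $U$ can intersect $S$, and the row-sum-zero relation $A_{ii} = \sum_{\ell \ne i} A_{i\ell}$ provides the exact correspondence between LHS diagonal-entry expansions and RHS partial-sum expansions.
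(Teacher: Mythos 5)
Your reduction to a Pfaffian identity is sound, but it is really just a change of language from the paper's own starting point: extracting the $z_1\cdots z_k$ coefficient by deleting rows and columns $r+1,\dots,r+k$ and expanding $\mathrm{Pf}$ over perfect matchings is the same as the paper's expansion of $\det M$ over involutions fixing exactly the positions $r+1,\dots,r+k$, and your multigraph on $[r]$ with $k$ degree-one vertices (paths, cycles, self-loops) is the paper's path argument for its graph on $[2r]$. You also correctly locate the only discrepancy, at the self-pairs $\{i,r+i\}$, where $N$ carries the full row sum $A_{ii}=\sum_{\ell\ne i}A_{i\ell}$ while $N_S$ and $N_{S'}$ carry the two partial sums. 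Up to this point you are on the paper's track.

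The gap is that the step you label ``the crux'' is exactly the one nontrivial step of the proof, and your proposal does not supply it: ``everything telescopes'' is asserted rather than argued, and ``fix a monomial $m\cdot\prod_{i\in U}y_i$ and compare coefficients'' is not a mechanism. What has to be shown is the following. Expand every self-loop entry on both sides into its constituent terms, i.e.\ choose for each self-loop at $i$ an index $\tau(i)\ne i$ (with $\tau(i)\in[r]$ on the left, and $\tau(i)$ in $S$ or $S'$ on the right, matching the partial sums). Both sides then become sums over the same decorated matchings $(\pi,\tau)$ with identical weights, and a pair of matchings on the right glues to a given $(\pi,\tau)$ precisely when $(S,S')$ is a union of components of $G_\pi$ augmented by the edges $\{i,\tau(i)\}$. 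The constraints force the augmented component of $1$ into $S$ and every other path's component into $S'$, while each augmented component containing no path may be placed on either side; so the right side counts $(\pi,\tau)$ with multiplicity $2^{m}$, where $m$ is the number of path-free augmented components, and the identity reduces to the cancellation over $\mathbb{F}_2$ of all terms with $m\ge 1$. That cancellation is where the row-sum-zero hypothesis does its real work, and it is the content of the paper's closure construction together with its vanishing claim for the set $\Omega_1$: grouping the $m\ge 1$ terms by the union $S_1\subseteq[r]\setminus[k]$ of the path-free components, the inner sum over all structures supported on $S_1$ is exactly $\det M(A,\zz)[S_1]$ with all of its $z$-entries equal to zero, which vanishes because the rows of $\Arows[S_1,S_1]$ sum to zero, so that matrix is singular. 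Without this (or an equivalent) cancellation argument the identity is unproven. Note also that your Pfaffian reformulation silently assumes $k$ even; for odd $k$ both sides vanish for parity reasons (as the paper records earlier), and this should be said explicitly.
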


\begin{proof}
We have
\[\det M = \sum_{\sigma, \sigma^2 = 1} \prod_i M_{i \sigma(i)}.\]
For a $\sigma$ to contribute to the $z_1\dots z_k$ term, we must have $\sigma(r+j) = r+j$ for $1 \le j \le k$. It may have $\sigma(i) = i$ for no other $i$, or the contribution will be zero. Let $\Omega$ be the set of involutions of $[2r]$ with these restrictions.Then we can write the coefficient for the $z_1 \dots z_k$ term as
\[\sum_{\sigma \in \Omega} \prod_{\substack{i \le r \\ \sigma(i) \le r}}  (y_iy_{\sigma(i)})^2 \prod_{\substack{i > r + k \\ i - r \ne \sigma(i)}} a_{i-r, \sigma(i)}^2 \prod_{\substack{i > r+k \\ i - r= \sigma(i)}} \left( \sum_{\substack{j \le r \\ j \ne i -r}} a_{i-r, j} \right)^2\]
Given a $\sigma \in \Omega$, let $\tau$ be a function that has domain the set of $j \le r$ so $\sigma(j + r) = j$ and has codomain $[r]$, subject to the restriction that $\tau(j) \ne j$ for each $j$. Then, using $(\sigma, \tau) \in \Omega'$ to denote a pair of this form, we can write the above coefficient as
\begin{equation}
\label{eq:sig_tau}
\sum_{(\sigma, \tau) \in \Omega'} \prod_{\substack{i \le r \\ \sigma(i) \le r}} y_iy_{\sigma(i)} \prod_{\substack{i > r + k \\ i - r \ne \sigma(i)}} a_{i-r, \sigma(i)} \prod_{\substack{i > r+k \\ i - r= \sigma(i)}} a_{i-r, \tau(i - r)}
\end{equation}
Given $(\sigma, \tau) \in \Omega'$, let $\mathscr{B} = \mathscr{B}(\sigma, \tau)$ denote the minimal set of indices that contains $r+1$ and so that

\begin{enumerate}
\item For $i \le r$, $i \in \mathscr{B}$ if and only if $i + r \in \mathscr{B}$.
\item $i \in \mathscr{B}$ if and only if $\sigma(i) \in \mathscr{B}$.
\item If $\sigma(i + r) = i$ and $\tau(i) \in \mathscr{B}$, then $i \in \mathscr{B}$.
\end{enumerate}

We claim that $\mathscr{B}$ will contain precisely one other $r+ l \le r + k$ besides $r + 1$. To see this, consider the graph with vertices lying in $[2r]$, an edge from $i$ to $i+r$ for all $i \le r$, and an edge from $i$ to $\sigma(i)$ for all $i \le 2r$. As $\sigma(r+l) = r+l$ for $l \le k$, $r + l$ has degree one in this graph for $1 \le l \le k$. Every other vertex will have degree two. Then the connected component containing $r+1$ in this graph must be a path, and the terminus of this path must be some other $r+l$ where $1 \ne l \le k$. Then the closure of $1$ under the first two rules will contain exactly one other $r+l$ where $1 \ne l \le k$. Since $\sigma(i + r) \ne i$ for $i \le k$, the third rule does not add any such $r + i \le r+k$.

But then we have a bijection between $(\sigma, \tau) \in \Omega'$ and tuples $(S, \sigma_1, \tau_1, \sigma_2, \tau_2)$ so that
\begin{enumerate}
\item $S$ is contained in $[r]$ and $S \cap [k] = \{1, l\}$ for some $l$.
\item $\sigma_1$ is an involution of  $\mathscr{B}_S = \{i \in [2r] : i \in S \text{ or } i- r \in S\}$ that fixes $\{r+1, r+l\}$ and no other elements.
\item $\sigma_2$ is an involution of the complement $\mathscr{B}_{S'}$ of $\mathscr{B}_S$ that fixes $\{r+j : 2 \le j \le k\text{ and } j \ne l\}$ and no other elements. (Here, $S'$ denotes the complement of $S$ as always).
\item $\tau_1$ is a function from $\{i \in S : \sigma(i + r) = i\}$ to $S$ so that $\tau_1(i) \ne i$ for all $i$ the domain.
\item $\tau_2$ is a function from $\{i \in S': \sigma(i + r) = i\}$ to $S'$ so that $\tau_2(i) \ne i$ for all $i$ in the domain.
\item The closure of $1$ with respect to $(\sigma_1, \tau_1)$ is the entirety of $\mathscr{B}_S$.
\end{enumerate}
For an $S$ obeying the first condition, we denote by $\Omega'[S']$ the set of pairs $(\sigma_2, \tau_2)$ obeying the third and fifth condition. In analogy to \eqref{eq:sig_tau}, we have that $\det M(A, \zz)[S']$ has the same $z_2 \dots z_{l-1}z_{l+1} \dots z_k$ coefficient as
\[\sum_{(\sigma_2, \tau_2) \in \Omega'[S']} \prod_{\substack{i \in S' \\ \sigma_2(i) \le r}} y_iy_{\sigma_2(i)} \prod_{\substack{i - r \in S' \\i > r + k \\ i - r \ne \sigma_2(i)}} a_{i-r, \sigma_2(i)} \prod_{\substack{i - r \in S' \\ i - r = \sigma_2(i)}} a_{i-r , \tau_2(i - r)}.\]
Note that the sixth condition adds an extra complication to our analysis of $(\sigma_1, \tau_1)$, but this will turn out not to matter. For $1 \in S$, take $\Omega_0[S]$ to be the set of elements $(\sigma_1, \tau_1)$ of $\Omega'[S]$ such that the closure of $1$ with respect to the pair is $\mathscr{B}_S$. The bijection above establishes that the $z_1 \dots z_k$ coefficient of $\det M(A, \zz)$ equals

\[\sum_{\substack{1 \in S \subseteq [r] \\ |S \cap [k]| = 2}} \left( \sum_{(\sigma, \tau) \in \Omega'[S']} \,\,\,\,\ \prod_{\substack{i  \in S' \\ \sigma(i) \le r}} y_iy_{\sigma(i)} \,\,\,\, \prod_{\substack{i - r \in S' \\i > r + k \\ i - r \ne \sigma(i)}} a_{i-r, \sigma(i)} \,\,\,\, \prod_{\substack{i - r \in S' \\ i - r = \sigma(i)}} a_{i-r, \tau(i - r)}\right)\]
\[\cdot \left( \sum_{(\sigma, \tau) \in \Omega_0[S]} \,\,\,\,\ \prod_{\substack{i  \in S \\ \sigma(i) \le r}} y_iy_{\sigma(i)} \,\,\,\, \prod_{\substack{i - r \in S \\i > r + k \\ i - r \ne \sigma(i)}} a_{i-r, \sigma(i)} \,\,\,\, \prod_{\substack{i - r \in S  \\ i - r = \sigma(i)}} a_{i-r, \tau(i - r)}\right).\]
But this equals the $z_1 \dots z_k$ coefficient of

\begin{equation}
\label{eq:almost_rec}
\sum_{\substack{1 \in S \subseteq [d] \\ |S \cap [k]| = 2}}  \det M(A, \zz)[S']
\end{equation}
\[\cdot z_1z_l\left( \sum_{(\sigma, \tau) \in \Omega_0[S]} \,\,\,\,\ \prod_{\substack{i  \in S \\ \sigma(i) \le r}} y_iy_{\sigma(i)} \,\,\,\, \prod_{\substack{i - r \in S \\i > r + k \\ i - r \ne \sigma(i)}} a_{i-r, \sigma(i)} \,\,\,\, \prod_{\substack{i - r \in S  \\ i - r = \sigma(i)}} a_{i-r, \tau(i - r)}\right).\]
Unlike the sum over $\Omega'[S']$, we cannot immediately simplify the sum over $\Omega_0[S]$ to a determinant. However, denoting by $\Omega_1$ the set of $(\sigma, \tau) \in \Omega'[S]$ that are not in $\Omega_0[S]$, we claim that, for any $S$,
\[\left( \sum_{(\sigma, \tau) \in \Omega_1[S]} \,\,\,\,\ \prod_{\substack{i  \in S \\ \sigma(i) \le r}} y_iy_{\sigma(i)} \,\,\,\, \prod_{\substack{i - r \in S \\i > r+ k \\ i - r \ne \sigma(i)}} a_{i-r, \sigma(i)} \,\,\,\, \prod_{\substack{i - r \in S \\ i - r = \sigma(i)}} a_{i-r, \tau(i - r)}\right) = 0.\]

For, by \eqref{eq:almost_rec}, this is equal to the sum over nonempty $S_1 \in \{S_1 \subset S : S_1 \cap [k] = \emptyset\}$ of the product of
\begin{equation}
\label{eq:mas1}
\det M(A, \zz)[S_1]
\end{equation}
with another factor, where $S_1$ in this case denotes the indices outside the closure of $1$. But \eqref{eq:mas1} is zero if $S_1 \ne \emptyset$ as $S_1$, by definition, has no index of a nonzero $\zz$ component. Then adding the permutations in $\Omega_1(S)$ makes no difference, and we get that the $z_1 \dots z_k$ component of $\det M$ is equal to
\[\sum_{\substack{1 \in S \subseteq D \\ |S \cap [k]| = 2}} \det M(A, \zz)[S] \cdot \det M(A, \zz)[S']\]
as claimed.
\end{proof}
This finishes the proof of Proposition \ref{prop:main_rec} for $t=1$, proving the first case of Theorem \ref{thm:tab_power}.

\subsection{The seven other cases of Theorem \ref{thm:tab_power}} The recurrences we want again come from enumerating over $(\sigma, \tau)$ pairs and using closures. We start by handling the two easiest cases, 3 and 5a, showing the strength of this strategy. For $\mathbf{u}$ an arbitrary $r$ dimensional vector, consider
\[\left| \begin{array}{lll} A + A^{\top} & A^{\top} & \mathbf{u} \\ A & D_z & 0 \\ \mathbf{u}^{\top} & 0 & 0 \end{array} \right|.\]
Suppose we want to find the $z_1 \dots z_k$ term of this. Given a contributing $(\sigma, \tau)$ pair, the closure of $2r + 1$ will contain exactly one $z_l$ with $1 \le l \le k$. Then the $z_1 \dots z_k$ term of this determinant equals that of
\[\sum_{\substack{S \subset [d] \\ |S \cap [k] | = 1}} \left| \begin{array}{lll} 0 & \Arows[S, S]^{\top} & \mathbf{u}[S] \\ \Arows[S, S] & D_{z[S]} & 0 \\ \mathbf{u}[S]^{\top} & 0 & 0 \end{array} \right| \cdot \det M_1(A, \zz)[S']\]
But the first determinant here is linear in the $z_i$, so the restriction on $S$ is unnecessary. Then the sum does not depend on $k$, so we get an expression that simply equals our original determinant. Rewriting the first determinant in this sum in the more compact form $\det O(A, \zz, \mathbf{u})$ where we define
\[O(A, \mathbf{v}_1, \mathbf{v}_2) :=  \left( \begin{array}{ll} A & \mathbf{v}_1 \\ \mathbf{v}_2^{\top} & 0 \end{array} \right), \]
we get
\[\left| \begin{array}{lll} A + A^{\top} & A^{\top} & \mathbf{u} \\ A & D_z & 0 \\ \mathbf{u}^{\top} & 0 & 0 \end{array} \right| = \sum_{S \subseteq [r]} \det O(A, \zz, \mathbf{u})[S] \cdot \det M_1(A, \zz)[S'] .\]
Taking $\mathbf{u} = \mathbf{y}$ in this equation, we see that, for the summand indexed by $S$ to be nonzero, we need $\sum_{i \in S} y_i \ne 0$, since otherwise the first $r$ columns of the first matrix sum to zero. But then we need $\sum_{i \in S} z_i \ne 0$, since otherwise the first $r$ rows will sum to zero. If both these are satisfied, so that the $d$ corresponding to $S$ is equal to $3$ mod $8$, we get that this determinant equals $g(d)$ per Table \ref{tab:red}. This gives row 3.

Taking $\mathbf{u} = \mathbf{y} + \mathbf{z}$, we see that, to contribute to the sum, the $n'$ corresponding to $S$ is either $7$ or $5$ mod $8$. But if it is $7$ mod $8$, then the determinant would also be divisible by $\sum_{i \in S} z_i$, which is zero. So $d$ must be $5$ mod $8$, and we see that the first determinant is again $g(d)$. This finishes the argument for row 5a.

Row 7b is not much harder. If $\mathbf{v}$ is another column vector, we get
\[\left| \begin{array}{llll} A + A^{\top} & A^{\top} & \mathbf{u} & \mathbf{v} \\ A & D_z & 0 & 0 \\ \mathbf{u}^{\top} & 0 & 0 & 0 \\ \mathbf{v}^{\top} & 0 & 0 & 0\end{array} \right| \]
\[= \sum_{\substack{S \subseteq [r] \\ S_0 \subseteq S}} \det O(A, \zz, \mathbf{u})[S_0] \cdot \det O(A, \zz, \mathbf{v})[S - S_0] \cdot \det M_1(A, \zz)[S'] .\]
Taking $\mathbf{u} = \mathbf{y}$ and $\mathbf{v} = \mathbf{y} + \mathbf{z}$ and again referring to Table \ref{tab:red} then gives us row 7b.

Next up is 7a. Take $\mathbf{w}$ to be another column vector, and consider
\begin{equation}
\label{eq:7a_gen}
\left| \begin{array}{llll} A + A^{\top} & A^{\top} & \mathbf{u} & 0 \\ A & D_z & 0 & \mathbf{w} \\ \mathbf{u}^{\top} & 0 & 0 & 0 \\ 0 & \mathbf{w}^{\top} & 0 & 0\end{array} \right|.
\end{equation}
In a contributing $(\sigma, \tau)$ pair, the closure of row $2r + 2$ will either contain $2r + 1$ or exactly one index between $r+1$ and $2r$. In the latter case, to go through the standard argument of proving two formulae the same by proving that they have the same $z_1 \dots z_k$ coefficient, we will need a linearization of
\[\left| \begin{array}{lll} A + A^{\top} & A^{\top} & 0 \\ A & D_z & \mathbf{w} \\ 0 & \mathbf{w} & 0 \end{array} \right|\]
with respect to the $z_i$. Such a linearization is given by $\det N$ with $N$ defined as
\[N(A, \mathbf{z}, \mathbf{w}) := \left( \begin{array}{llll} A + A^{\top} & A^{\top} & 0 & 0 \\ A & 0 & \mathbf{z} & \mathbf{w} \\  0 & \mathbf{z}^{\top} & 0 & 0 \\ 0 & \mathbf{w}^{\top} & 0 & 0\end{array}\right).\]
Then \eqref{eq:7a_gen} is
\begin{equation}
\label{eq:7a_develop}
\sum_{S \subseteq [r]} \det O(A, \mathbf{w}, \mathbf{u})[S] \cdot \det M_1(A, \zz)[S']
\end{equation}
\[ + \sum_{\substack{S \subseteq [r] \\ S_0 \subseteq S}} \det O(A, \zz, \mathbf{u})[S_0] \cdot \det N(A, \zz, \mathbf{w})[S - S_0] \cdot \det M_1(A, \zz)[S']\]
where we have split the sum into those $(\sigma, \tau)$ where $2r+2$ has closure containing $2r+1$ and those where it does not. 

Take $\mathbf{w} = \yy$ and $\mathbf{u} = \mathbf{y} + \mathbf{z}$. Note that $\det N(A, \zz, \yy)$ is zero if $n$ is $3$ mod $4$, since the first $r$ columns of $N$ then sum to $0$. Additionally, if $n$ is $1$ mod $8$, then the first $2r$ columns of $N(A, \zz, \yy)$ sum to zero. So $\det N$ is nonzero only if $n$ is $5$ mod $8$. But then the second sum in \eqref{eq:7a_develop} is zero if $n$ equals $7$ mod $8$, as $S_0$ must correspond to $5$ mod $8$, $S- S_0$ to $5$ mod $8$, and $S'$ to $1$ mod $8$ for the summand to be zero, and
\[5 \cdot 5 \cdot 1 \not\equiv 7 \mod (8).\]
Note that, if $S$ corresponds to a divisor $d$ equaling $7$ mod $8$, then $\det O(A, \mathbf{y}, \mathbf{y} + \mathbf{z})[S]$ is equal to $g(d)$. This finishes case 7a.

For case 5b we just need to prove that, if $n \equiv 5 \,(8)$, then
\begin{equation}
\label{eq:5b_is_easy}
\det N(A, \zz, \yy) = \sum_{\substack{d | n \\ d\, \equiv \, 3\, (8)}} g(d) g(n/d)
\end{equation}
The closure of column $2r + 1$ in a contributing $(\sigma, \tau)$ pair will certainly contain column $2r + 2$, and it will do so with a single pass through $A + A^{\top}$. If we define $S$ as the subset of $[r]$ in the closure of $2r + 1$ up to the pass through $A + A^{\top}$, we can use the normal argument to prove
\[\det N(A, \zz, \yy) = \sum_{S \subseteq [r]} \det O(A, \zz, \yy)[S] \det O(A, \yy, \yy)[S'].\]
Take $d | n$ corresponding  to $S$. Then $d \equiv 3 \, (8)$ for any contributing $S$. This forces $n/d$ to be $7$ mod $8$, in which case the second term is $g(n/d)$. We thus get \eqref{eq:5b_is_easy}, finishing case 5b.

We just have cases 2 and 6 left, and we start by considering
\[P(A, \yy, \zz) = \left( \begin{array}{ll} D_{y + z}  & A^{\top} \\ A & D_z \end{array} \right).\]
We claim that the determinant of this equals $\det(A + D_z)$ if $\sum_i y_i = 0$ and is otherwise $0$. Suppose first that $\sum_i y_i = 1$. Then we see that the $2r$ dimensional vector $(\yy + \zz, \zz)$ is the sum of the columns of $P$. Then the corank of
\[\left( \begin{array}{ll} (\yy + \zz) \cdot (\yy + \zz)^{\top}  + D_{y + z}  & (\yy + \zz) \cdot \zz^{\top} + A^{\top} \\  \zz \cdot (\yy + \zz)^{\top} + A & \zz \cdot \zz^{\top} + D_z \end{array} \right)\]
is exactly one greater than that of $P$ (we see that $(\yy + \zz, \zz)$ is not in this column space since the sum of its elements is not $0$). But this is an alternating matrix that hence has even rank. Then $P$ has odd rank, and $\det P = 0$.

So assume $\sum_i y_i = 0$. With elementary row and column operations, we find
\[\det P(A, \yy, \zz) = \left| \begin{array}{ll} \yy \cdot \yy^{\top} & A^{\top} + D_z \\ A  + D_z & D_z \end{array} \right|\]
If $\yy = 0$, then the determinant of this matrix is $\det (A + D_z)$, as claimed. Otherwise, permute $A$ so $y_i = 1$ for $1 \le i \le l$ for some $l$ and $y_i = 0$ for $i > l$. Then define an $r \times r$ matrix $C$ by
\[C_{ij} = \begin{cases} (A + D_z)_{ij} - (A + D_z)_{i1} & \text{for } 1 < j \le l \\
(A + D_z)_{ij} & \text{otherwise.}\end{cases}\]
$C$ is derived from $A$ by adding the first column to columns two through $l$. In particular, take $D_{(1, 0, \dots0)}$ to be the $r \times r$ diagonal matrix that is nonzero only in the upper left hand corner. Then we get that $\det P(A, \zz, \yy)$ equals
\[\left| \begin{array}{ll} D_{(1, 0, \dots0)} & C^{\top} \\ C & D_z \end{array} \right| = \det C + \det \left( \begin{array}{ll} 0 & C\big[[r], [r] - \{1\}\big] ^{\top} \\ C\big[[r], [r] - \{1\}\big]  & D_z \end{array} \right).\]
Notice that the sum of the $r-1$ columns of $C\big[[r], [r] - \{1\}\big]$ is $\zz$. Then the second determinant here equals
\[\det \left( \begin{array}{ll} 0 & C\big[[r], [r] - \{1\}\big] ^{\top} \\ C\big[[r], [r] - \{1\}\big]  & \zz \cdot \zz^{\top} + D_z \end{array} \right).\]
But this is an alternating matrix of odd dimension, so it has odd corank, so it has determinant zero. Then $\det P(A, \zz, \yy) = \det C = \det (A + D_z)$ when $\sum_i y_i = 0$.

Now look at $M_2$. Given a $(\sigma, \tau)$ pair, we see that the set of $i \le r$ with $\sigma(i) = i$ and the set of $i \le r$ with $\sigma(i) \le r$ but $i \ne \sigma(i)$ have different closures. We thus get
\[\det M_2(A, \yy, \zz) = \sum_{S \subseteq [r]} \det P(A, \yy, \zz)[S] \cdot \det M_1(A, \zz)[S'].\]
But our result above shows that, if $d | n$ corresponds to $S$ and $d \equiv 1 \, (4)$, then $\det P(A, \yy, \zz)[S] = g(2d)$. This gives case 2.

Consider case 6. We split $\det M_6$ as
\[\left|\begin{array}{lll}
				A + A^{\top}+ D_{y + z}  & A^{\top}  & 0 \\
				A                  & D_{z}       & \yy\\
				0 & \yy^{\top}      & 0 \end{array}\right| \,\,+\,\, \left|\begin{array}{lll}
				A + A^{\top}+ D_{y + z}  & A^{\top}  & \yy  \\
				A                  & D_{z}       & 0\\
				\yy^{\top}  & 0      & 0 \end{array}\right|.\]
Given a $(\sigma, \tau)$ pair, the closure of $2r + 1$ in the first matrix either contains one term from the diagonal $D_{y + z}$ or one term from the diagonal $D_z$. This determinant is then
\[\sum_{S \subseteq[r]} \left(\det O(A, \yy, \yy + \zz)[S] + \det N(A, \zz, \yy)[S]\right) \cdot \det M_2(A, \yy, \zz)[S'].\]
The $N$ term here cannot contribute, as $2 \cdot 1 \not\equiv 6 \, (8)$. The $O$ term is potentially nonzero when $S$ corresponds to $d \equiv 7\,(8)$ or $d \equiv 5 \, (8)$, but the latter case cannot contribute to the sum since it forces $n/d \equiv 6 \, (8)$. Then this determinant is
\[\sum_{ \substack{d | n \\ d\, \equiv \, 7 \, (8)}} g(d) \Lsha_2(n/d),\]
which is the second part of the sum.

Now consider the second determinant in our split of $\det M_6$. Write
\[T(A, \yy, \zz) = \left(\begin{array}{lll}
				 D_{y + z}  & A^{\top}  & \yy \\
				A                  & D_{z}       &  0\\
				 \yy^{\top}   &  0  & 0 \end{array}\right).\]

Considering the closure of $2r + 1$ together with the closure of the diagonal elements, we get
\[ \left|\begin{array}{lll}
				A + A^{\top}+ D_{y + z}  & A^{\top}  & \yy  \\
				A                  & D_{z}       & 0\\
				\yy^{\top}  & 0      & 0 \end{array}\right|
				= \sum_{S \subseteq [r]} \det T(A, \yy, \zz)[S] \cdot \det M_1(A, \yy, \zz)[S'].\]

With row and column additions, we find
\[\det T = \left|\begin{array}{lll}
				 0 & A^{\top} + D_z  & \yy \\
				A + D_z      & D_{z}       &  0\\
				 \yy^{\top}   &  0  & 0 \end{array}\right|
		= \left| \begin{array}{ll} A+ D_z  & \zz \\ \yy^{\top} & 0 \end{array} \right|. \]
Notice that $\yy^{\top}$, having elements summing to one, is not in the space spanned in the other rows. Then this determinant is nonzero if and only if the first $r$ rows of this $(r + 1) \times (r + 1)$ matrix have rank $r$. But notice $\zz$ is in the column space of $A + D_z$, so the first $r$ rows have this rank if and only if $\det(A + D_z) \ne 0$. We thus have
 \[\left|\begin{array}{lll}
				A + A^{\top}+ D_{y + z}  & A^{\top}  & \yy  \\
				A                  & D_{z}       & 0\\
				\yy^{\top}  & 0      & 0 \end{array}\right|
				= \sum_{S \subseteq [r]} \det \left(\Arows[S, S] + D_{z[S]}\right) \cdot \det M_1(A, \yy, \zz)[S'].\]
But this gives the first part of the sum in case 6, so $\det M_6 = \Lsha_6$, as claimed. This was the final case, so Theorem \ref{thm:tab_power} is proved.

\section{Positive density results}
\label{sec:density}
We now wish to find how often the $\Lsha_x$ are nonzero. This problem is very similar to finding the Selmer rank of a quadratic twist with full two torsion, but it is enough removed that we cannot just use the main results of Swinnerton-Dyer in \cite{Swin08} and Kane in \cite{Kane13}. Instead, we need to use the worst kind of generalization of these papers, giving what amounts to the wordy, technical underpinning of \cite[Theorem 3]{Kane13} and \cite[Theorem 1]{Swin08} in our Theorem \ref{thm:ksd_ugly}. That said, this also serves as an opportunity to clarify these two papers, whose arguments were rendered rather abstract by the lack of a concrete object to consider.

Recall that we defined $\left( \frac{d}{p} \right)_+$ to be the element in $\mathbb{F}_2$ given by
\[\left( \frac{d}{p} \right)_+ = \frac{1}{2} \left(1 - \left(\frac{d}{p} \right) \right).\]
We set up Theorem \ref{thm:ksd_ugly} with the following definitions.
\begin{defn*}
Take $D$ to be the product of a set of distinct odd primes. Choose $n_0$ to be an odd integer coprime to $D$, and take $X_{n_0, D}$ to be the set of squarefree integers $n$ coprime to $2D$ such that $n/n_0$ is positive and a quadratic residue mod $8D$.

Take $T_1$, $T_2$, $Q_1$, $Q_2$ to be finite sequences of integer divisors of $2D$, with $T_1$ and $T_2$ of the same length, and write $T_l$ as $\left(d_{l,1}, \dots, d_{l,t}\right)$. Finally, take $d_{diag}$ to be a divisor of $2D$, and take $C$ to be an arbitrary alternating $t \times t$ matrix over $\mathbb{F}_2$. Taking $b_l$ to be the product of the elements in $Q_l$, we assume that none of $b_1$, $b_2$, $-b_1b_2$ is a square, and that one of $b_1, b_2$ is not $-1$ times a square.

Given $n = p_1 \cdot \dots \cdot p_r$, we now define a $2r + t \times 2r + t$ matrix piece by piece. For $l = 1, 2$, define an alternating $r \times r$ matrix by
\[(B_l)_{ij} = \begin{cases} \sum_{d \in Q_l}  \left( \frac{d}{p_i}\right)_+ \left( \frac{d}{p_j}\right)_+ &\text{if } i \ne j \\ 0 &\text{if } i = j.\end{cases}\]
Also define $t \times r$ matrices $R_l$ by
\[(R_l)_{ij} = \left(\frac{d_{l,i}}{p_j}\right)_+.\]

Finally, take $A$ as in previous section, and take $D_{diag}$ to be the diagonal matrix defined by
\[(D_{diag})_{ii} = \left(\frac{d_{diag}}{p_i}\right)_+.\]
Then define
\[M^{alt} = \left( \begin{array}{lll} B_1 & A^{\top} + D_{diag} & R_1^{\top} \\ A + D_{diag} & B_2 & R_2^{\top} \\ R_1 & R_2 & B \end{array} \right). \]

$M^{alt}$ is a function of tuples of primes $(p_1, \dots, p_r)$ whose product $n$ is in $X_{n_0, D}$. We typically ignore the dependence on the ordering of this tuple, writing this function as $M^{alt}(n)$.
\end{defn*}
\begin{defn*}(Corank correction)
Let $D$, $n_0$, $B$, $T_1$, $T_2$, $Q_1$, $Q_2$, and $d_{diag}$ be as in the above definition, so that $M^{alt}(n)$ is defined for $n = p_1\cdot \dots \cdot p_r \in X_{n_0, D}$. Take $\vv$ to be the sum of the first $r$ columns of $M^{alt}$, take $\vv'$ to be the sum of columns $r+1$ to $2r$, and take $\vv_i$ to be column $2r + i$ for $1 \le i \le t$.

Then define $\delta$ to be the least integer so that $\{\vv, \vv', \vv_1, \dots, \vv_t\}$ has rank $t + 2 - \delta$ for some choice of $n \in X_{n_0, D}$.
\end{defn*}

\begin{thm}
\label{thm:ksd_ugly}
Let $D$, $n_0$, $B$, $T_1$, $T_2$, $Q_1$, $Q_2$, and $d_{diag}$ be as in the above definitions, so that $M^{alt}(n)$ is defined for $n = p_1\cdot \dots \cdot p_r \in X_{n_0, D}$. Given this info, take $\delta$ as in the previous definition.

For $k \ge 0$, define
\[\alpha_k = 2^{k+1} \cdot \prod_{j=1}^k (2^j - 1)^{-1} \cdot \prod_{j=0}^{\infty} (1 + 2^{-j})^{-1}.\]
Then
\[\lim_{N \rightarrow \infty} \frac{\big| \left\{ n \in X_{n_0, D} \,\,: \,\, |n| \le N \, \text{ and } \,\text{corank}\left(M^{alt}(n)\right) = k + \delta \right\} \big|} {\big| \left\{ n \in X_{n_0, D} \,\,: \,\, |n| \le N  \right\}\big|} \]
\[ = \begin{cases} \alpha_k &\text{if } k \ge 0 \text{ and } k \equiv t +\delta \mod 2 \\ 0 & \text{otherwise.}\end{cases}\]
\end{thm}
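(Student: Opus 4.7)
The plan is to follow the moment method of Swinnerton-Dyer \cite{Swin08} and Kane \cite{Kane13}, adapted to the matrix $M^{alt}(n)$ defined here. The starting observation is that $M^{alt}(n)$ is a genuine alternating $(2r+t)\times(2r+t)$ matrix over $\mathbb{F}_2$: the diagonal blocks $B_1,B_2,C$ are alternating by construction, and the $(1,2)$ block $A^\top + D_{diag}$ equals the transpose of the $(2,1)$ block $A + D_{diag}$ because $D_{diag}$ is diagonal. Thus $\text{rank}(M^{alt}(n))$ is even, forcing $\text{corank}(M^{alt}(n)) \equiv t \pmod 2$. Moreover, by the definition of $\delta$, the vectors $\{\vv,\vv',\vv_1,\dots,\vv_t\}$ satisfy at least $\delta$ linear dependencies uniformly in $n\in X_{n_0,D}$ (these are structural relations that hold irrespective of the specific Legendre-symbol data), and each such dependency contributes a forced element to the kernel. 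Combining, $\text{corank}(M^{alt}(n))\geq \delta$ and $\text{corank}(M^{alt}(n))\equiv t\pmod 2$, so $k:=\text{corank}-\delta\geq 0$ must satisfy $k\equiv t+\delta\pmod 2$; this disposes of the wrong-parity case with density zero.

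For $k$ of the correct parity, I would compute the moments
\[
\mathfrak{m}_j := \lim_{N\to\infty}\frac{1}{\#\{n\in X_{n_0,D}:|n|\le N\}} \sum_{\substack{n\in X_{n_0,D}\\ |n|\le N}} 2^{j\cdot(\text{corank}\, M^{alt}(n)\,-\,\delta)}.
\]
Expanding $2^{j\cdot\text{corank}}$ as the number of $j$-tuples in the kernel, this reduces to computing, for each $j$-tuple $(\mathbf{w}_1,\dots,\mathbf{w}_j)\in\mathbb{F}_2^{j(2r+t)}$, the density of $n\in X_{n_0,D}$ with $M^{alt}(n)\mathbf{w}_i=0$ for every $i$. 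Each such equation encodes $\mathbb{F}_2$-linear conditions on the Legendre-symbol data $\left(\tfrac{p_i}{p_j}\right)_+$ and $\left(\tfrac{d}{p_i}\right)_+$ for $d\mid 2D$. Quadratic reciprocity converts these into residue conditions on the individual $p_i$ modulo a modulus bounded in terms of $8D$, $n_0$, and the other chosen primes, and primes in arithmetic progressions then produces the asymptotic density as a sum over "types" of prime tuples.

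The combinatorial identification step is to show that the limiting $\mathfrak{m}_j$ matches the $j$-th moment of $2^{\text{corank}}$ for a uniformly random large alternating $\mathbb{F}_2$-matrix of the specified parity class, namely $\prod_{i=1}^{j}(1+2^i)$; the explicit distribution $\{\alpha_k\}$ decays doubly-exponentially, so these moments uniquely determine it and give the claimed limit. The main obstacle, and where most of the work concentrates, is the equidistribution step: one must show that, as $n=p_1\cdots p_r$ ranges over $X_{n_0,D}$ (with $r$ itself growing), the joint Legendre-symbol data becomes asymptotically equidistributed subject only to reciprocity and the $X_{n_0,D}$ constraint, with sufficient uniformity in the number of test vectors $\mathbf{w}_i$ to justify exchanging limit and sum in the moment identity. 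Controlling non-generic tuples (those with many unexpected coincidences among Legendre symbols) via exponential-sum or large-sieve bounds is exactly the technical content of \cite{Swin08} and \cite{Kane13}; the extra blocks $R_1,R_2,D_{diag}$, and $C$ present here introduce only bookkeeping on top of that framework, since the hypotheses on $b_1,b_2,-b_1b_2$ not being squares ensure that the "columns from $T_1,T_2$" contribute genuinely independent Legendre-symbol coordinates whenever $r$ is sufficiently large.
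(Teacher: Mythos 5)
There is a genuine gap, and it sits exactly at the step you dismiss as bookkeeping: the identification of the limiting moments (equivalently, of the corank law) with that of a uniformly random alternating matrix. The matrix $M^{alt}(n)$ is very far from uniform in the space of $(2r+t)\times(2r+t)$ alternating matrices: it is determined by only about $\tfrac12 r(r-1)+(c+2)r$ bits, both off-diagonal $r\times r$ blocks repeat the single matrix $A$, and $B_1$, $B_2$, $R_1$, $R_2$ are built from the bounded family of symbols $\left(\frac{d}{p_i}\right)_+$ with $d\mid 2D$, hence have bounded complexity. That such a constrained ensemble nevertheless has coranks distributed like the unconstrained one is precisely the content of the theorem, and it depends essentially on the hypotheses that none of $b_1,b_2,-b_1b_2$ is a square and that one of $b_1,b_2$ is not $-1$ times a square: Example \ref{ex:AAT} shows that when these fail (e.g.\ for $\left(\begin{smallmatrix} 0 & A^{\top} \\ A & 0 \end{smallmatrix}\right)$ with $n\equiv 3\,(4)$) the limiting corank law is a different one (the Fouvry--Kl\"uners $4$-class-rank distribution), so the extra structure cannot be "only bookkeeping." You also locate these hypotheses in the wrong place: they concern $Q_1,Q_2$, i.e.\ the blocks $B_1,B_2$, not independence of the columns coming from $T_1,T_2$ (the latter are handled by the reductions involving $\delta$ and the subsets $S_t\subseteq[t]$). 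The paper supplies the missing argument by a Swinnerton--Dyer style Markov chain over bit assignments: condition on $M^{alt}_{[s]}$, adjoin the two rows and columns of a new prime, and prove (Lemma \ref{lem:swin}) that outside a proportion $O(0.875^s)$ of assignments no pair of functionals vanishing on the column space can annihilate the two new columns identically, so the transition probabilities are the uniform-alternating ones; this is where the non-square conditions enter. Your proposal contains no substitute for this analysis, so the asserted value $\prod_{i=1}^{j}(1+2^i)$ of the moments is exactly the unproved point.

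The rest of your plan is in line with what the paper does, with two caveats. The transfer to natural density is carried out with Kane's machinery (a variant of his Proposition 9, the large-$m$/small-$m$ dichotomy over test vectors, and his Proposition 10 to smooth the mod $8D$ data), and your remark about uniformity in the number of test vectors is that dichotomy; but note that the linear conditions $M^{alt}(n)\mathbf{w}=0$ involve symbols $\left(\frac{p_i}{p_j}\right)$ whose moduli are the growing primes themselves, so plain primes-in-progressions with bounded modulus cannot produce the densities and the double-oscillation estimate is unavoidable. Finally, moments of size $\prod_{i\le j}(1+2^i)$ grow far too fast for generic moment determinacy, and the even and odd limiting laws share these moments; recovering the distribution requires the support-restricted inversion argument of Heath-Brown and Kane together with the parity constraint you noted, not merely the doubly-exponential decay of $\alpha_k$.
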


\begin{rmk*}
The technical conditions on $b_1$ and $b_2$ are necessary, and interesting things happen if we ignore them. See Example \ref{ex:AAT}.
\end{rmk*}
We first use this result to prove Theorem \ref{thm:main}. We will need the following simple proposition.

\begin{prop}
\label{prop:sel3}
Given $n$, and given  $x \in \{5a, 5b, 6, 7a, 7b\}$ agreeing with $n$ mod $8$, $\Lsha_x(n)$ can only be nonzero if $\SelT(E^{(n)})$ has rank exactly three.
\end{prop}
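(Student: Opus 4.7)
The plan is to combine Monsky's 2-descent formula with an elementary bordered-matrix rank inequality.

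For $n \equiv t \pmod{8}$ with $t \in \{5, 6, 7\}$, Monsky's computations in \cite{Heat94} give, in complete parallel with the three formulas quoted in the proof of Theorem \ref{thm:rk0}, an identity of the form
\[\text{rk}\bigl(\SelT(E^{(n)})\bigr) = 2 + \text{crnk}\bigl(N_t(n)\bigr),\]
where $N_t(n)$ is an $\FFF_2$-matrix assembled from the Legendre-symbol data of $n$. Comparing Monsky's matrices to Table \ref{tab:tyz_tab}, I would verify that $N_t$ is precisely the principal upper-left block of $M_x$ whenever $x$ belongs to residue $t$; equivalently, $M_x$ is $N_t$ bordered by $k$ rows and $k$ columns, with $k = 1$ in cases $\{5a, 5b, 6\}$ and $k = 2$ in cases $\{7a, 7b\}$, the bordering vectors being combinations of $\yy$, $\zz$, $\yy + \zz$ and the $k \times k$ south-east corner being zero. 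Since the root number of $E^{(n)}$ is $-1$ for these residues, the 2-parity theorem (applicable here because $E^{(n)}$ has CM by $\QQ(i)$) makes the rank of $\SelT(E^{(n)})$ odd, hence at least $3$; this gives $\text{crnk}(N_t(n)) \ge 1$ automatically and forces this corank to be odd.

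Now suppose $\det M_x \ne 0$. The elementary bordered-matrix bound
\[\text{crnk}(M_x) \ge \text{crnk}(N_t(n)) - k,\]
proved by extending any kernel vector of $N_t$ by zeros into $\FFF_2^{\dim M_x}$ and observing that the only obstruction to remaining in the kernel of $M_x$ is $k$ linear conditions coming from the bordering rows, yields $\text{crnk}(N_t(n)) \le k$. For $k = 1$ (cases $5a$, $5b$, $6$), combined with the lower bound $\ge 1$ this forces $\text{crnk}(N_t) = 1$, so $\text{rk}(\SelT(E^{(n)})) = 3$. For $k = 2$ (cases $7a$, $7b$) we only obtain $\text{crnk}(N_t) \le 2$ a priori, but the odd-parity constraint again pins it down to $\text{crnk}(N_t) = 1$, giving the result.

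The main obstacle is pinning down the Monsky matrix $N_t$ explicitly for each $t \in \{5, 6, 7\}$ and verifying its identification with the inner block of $M_x$. Tracing through the proof of Theorem \ref{thm:tab_power} should assist: the constructive way the $M_x$ were built around a principal $2r \times 2r$ block already exhibits Monsky's matrix as the natural inner piece. The parity lower bound is standard, either via the 2-parity theorem as stated or derived directly from Monsky's formulation by identifying an automatic null vector of $N_t$ for the three residues in question.
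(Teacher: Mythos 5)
Your strategy is the same as the paper's: identify each $M_x$ as the $2r \times 2r$ block $M_1$ (or $M_2$ in case $6$) bordered by $k$ rows and $k$ columns with a zero south-east corner, invoke the bordered-matrix bound $\text{crnk}(M_x) \ge \text{crnk}(N_t) - k$, and combine this with Monsky's rank formula and a parity constraint to pin down $\text{rk}(\SelT(E^{(n)}))$. The use of the $2$-parity theorem as the source of the parity constraint is a slightly more explicit packaging than the paper's ``per Monsky,'' but it's the same underlying input and is unconditional for the CM curve $E^{(n)}$.

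There is, however, a concrete error in the step you flag as ``the main obstacle.'' You assert a uniform identity $\text{rk}(\SelT(E^{(n)})) = 2 + \text{crnk}(N_t(n))$, but that is not what Monsky's computation gives for $n \equiv 3\,(4)$. The paper's own quotation of Monsky in the proof of Theorem \ref{thm:rk0} reads $\text{rk}(\SelT(E^{(n)})) = 1 + \text{crnk}(M_3\big[[2r],[2r]\big])$ for $n \equiv 3\,(8)$, and $M_3\big[[2r],[2r]\big]$ is exactly $M_1$, so the correct constant is $1$, not $2$; the same shift by one persists for $n \equiv 7\,(8)$. (A sanity check: for $n = p$ a single prime $\equiv 7\,(8)$, both diagonal entries of the $2\times 2$ matrix $M_1$ vanish, so $\text{crnk}(M_1) = 2$, while $\text{rk}(\SelT(E^{(p)})) = 3$.) Consequently, for the $7a$, $7b$ cases the corank of $M_1$ when the Selmer rank is $3$ is $2$, not $1$, and the parity statement should be ``$\text{crnk}(M_1)$ is even and $\ge 2$'' rather than ``odd.'' Your bordered-matrix bound $\text{crnk}(M_1) \le 2$ then pins it to exactly $2$, which still yields rank $3$, so the final conclusion survives — but only because the off-by-one in the Monsky formula and the resulting flipped parity happen to cancel. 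As written, the step where you ``verify that $N_t$ is precisely the principal upper-left block of $M_x$'' would expose the inconsistency, since the block is $M_1$ for both $t = 5$ and $t = 7$ while the Monsky constants differ. Replacing your asserted identity with the correct residue-dependent version ($2 + \text{crnk}(M_1)$ for $t=5$, $2 + \text{crnk}(M_2)$ for $t=6$, $1 + \text{crnk}(M_1)$ for $t=7$) repairs the argument and makes it identical to the paper's.
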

\begin{proof}
If $n \equiv 5 \, (8)$, then the minimal possible corank of $M_1$ is one. It achieves this corank if and only if $\SelT(E^{(n)})$ has rank three per Monsky. Since $M_{5a}$ and $M_{5b}$ are constructed from $M_1$ by adding one row and one column, neither can have corank more than one less than that of $M_1$.

Similarly, if $n \equiv 7 \, (8)$, then the minimal possible corank of $M_1$ is two. It achieves this corank if and only if $\SelT(E^{(n)})$ has rank three per Monsky. Since $M_{7a}$ and $M_{7b}$ are constructed from $M_1$ by adding two rows and two columns, neither can have corank more than two less than that of $M_1$.

Finally, if $n \equiv 6 \, (8)$, then the minimal possible corank of $M_2$ is one. It achieves this corank if and only if $\SelT(E^{(n)})$ has rank three per Monsky. Since $M_6$ is constructed from $M_2$ by adding one row and one column, it cannot have corank more than one less than that of $M_2$.
\end{proof}

To apply Theorem \ref{thm:ksd_ugly}, we need to convert the matrices of the last section to alternating matrices. We do this now.
\begin{prop}
\label{prop:to_ksd}
For every function $F$ in the left column of Table \ref{tab:tyz_alt}, use the data in the remaining columns, in addition to $d_{diag} = 1$ and $B = 0$, to define a set $X_{n_0, 8}$ and a matrix $M^{alt}(n)$ for $n \in X_{n_0, 8}$. Then, for $n \in X_{n_0, 8}$, $F(n)$ is nonzero if and only if $\text{crnk}(M^{alt}(n)) = \delta$, where $\delta$ is as defined above and is given for reference as the final column of the table.
\end{prop}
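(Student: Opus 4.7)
The plan is to prove Proposition \ref{prop:to_ksd} one row of Table \ref{tab:tyz_alt} at a time. By Theorem \ref{thm:tab_power}, each $F$ in the left column of the table equals $\det M_x$ for the matching $M_x$ of Section 2, so the task reduces to showing that the parameters in the remaining columns are chosen so that
\[\det M_x(n) \ne 0 \iff \text{crnk}(M^{alt}(n)) = \delta.\]

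The key structural observation is that over $\mathbb{F}_2$ every diagonal matrix $D_u$ decomposes as $D_u = B_u + \mathbf{u}\mathbf{u}^{\top}$, where the alternating matrix $B_u$ has $(B_u)_{ij} = u_iu_j$ for $i\neq j$ and $0$ on the diagonal. Applied to the $D_z$ block (with $u = \mathbf{z}$, corresponding to $Q_2 = \{2\}$) and, for rows 2 and 6, to the $D_{y+z}$ block (corresponding to $Q = \{-2\}$), together with the quadratic-reciprocity identity $A + A^{\top} = B_{-1}$ for the alternating $B_{-1}$ coming from $Q_1 = \{-1\}$, this rewrites each $M_x$ as $N + WW^{\top}$ for a specific alternating matrix $N$ and an auxiliary matrix $W$. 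The bordering vectors $\mathbf{y}$ and $\mathbf{y}+\mathbf{z}$ that appear explicitly in the $M_x$ of rows 3, 5a, 5b, 6, 7a, 7b are either absorbed into the $R_l$ blocks of $N$, or contributed as columns of $W$; in the table each is recorded through a choice of $d_{l,i} \in \{\pm 1, \pm 2\}$ so that $(d_{l,i}/p_j)_+$ reproduces the desired entries. The alternating matrix obtained by bordering $N$ with the columns of $W$ then coincides with the $M^{alt}$ of Theorem \ref{thm:ksd_ugly} for the tabulated parameters.

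The equivalence $\det M_x \ne 0 \iff \text{crnk}(M^{alt}) = \delta$ then follows from the Schur-complement identities over $\mathbb{F}_2$: when $N$ is invertible one has $\det(N + WW^{\top}) = \det N \cdot \det(I_t + W^{\top}N^{-1}W)$ and $\det M^{alt} = \det N \cdot \det(W^{\top}N^{-1}W)$, with $W^{\top}N^{-1}W$ alternating because $N^{-1}$ inherits alternation from $N$. The singular case is handled by the standard rank-one update analysis for alternating matrices over $\mathbb{F}_2$ underlying the Swinnerton-Dyer and Kane arguments: augmenting an alternating $N$ by a vector $\mathbf{w}$ raises the rank by exactly $2$ when $\mathbf{w} \notin \text{col}(N)$, and the same condition controls the effect of the rank-one update $N \mapsto N + \mathbf{w}\mathbf{w}^{\top}$. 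Unpacking these identities shows that $\det M_x$ vanishes exactly when $M^{alt}$ fails to attain its minimum possible corank, which equals $\delta$ by computing the rank of $\{\mathbf{v},\mathbf{v}',\mathbf{v}_1,\dots,\mathbf{v}_t\}$: the Monsky identity $\sum_j A_{ij} = 0$ together with the congruence conditions on $\sum_i y_i = (-1/n)_+$ and $\sum_i z_i = (2/n)_+$ fixed by $n \in X_{n_0,8}$ determine precisely the forced dependencies among these column sums. The main obstacle will be the case-by-case accounting of these forced dependencies: rows 2 and 6, where the $D_{y+z}$ block introduces two auxiliary vectors and the column sums pick up extra forced dependencies, are the most delicate, while row 1, with $t = 0$ and $\delta = 0$, is the cleanest and illustrates the method in its simplest form.
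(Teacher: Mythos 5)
Your overall strategy---trading the diagonal blocks $D_z$, $D_{y+z}$ for alternating blocks plus rank-one pieces via $D_{\mathbf{u}} = B_{\mathbf{u}} + \mathbf{u}\mathbf{u}^{\top}$ and $A + A^{\top} = B_1$, then converting nonvanishing of the determinant into the statement that $M^{alt}$ attains its minimal corank $\delta$---is indeed the idea of the paper's proof. But the machinery you propose for the conversion does not work as stated. The Schur-complement identities you quote require the alternating core $N$ to be invertible, and in every case of Table \ref{tab:tyz_alt} it never is: the core is of odd size $2r+1$ in the $5a$, $5b$, $5a{+}5b$, $6$ rows, and in all cases carries forced dependencies (rows of $A$ sum to zero, the parities of $\sum_i y_i$ and $\sum_i z_i$ are fixed by $n_0$), so your entire argument lives in the ``singular case'' that you defer. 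The principle you invoke there is false: a rank-one update $N \mapsto N + \mathbf{w}\mathbf{w}^{\top}$ changes the rank by at most one, whereas bordering an alternating matrix by $\mathbf{w}$ changes its rank by $0$ or $2$, so the two are not ``controlled by the same condition.'' Moreover $M^{alt}$ is not obtained from the core of $M_x$ by bordering with the columns of $W$; it has the same dimensions as $M_x$ (or differs in individual entries, e.g.\ a corner $1$ that must be argued away in the $7a$, $7b$ cases), and the passage between the two is exactly where the content lies: the paper augments by vectors such as $(0,\zz,1)$ that are known to be sums of columns, performs the column operations that simultaneously create $\zz\cdot\zz^{\top}$ and modify the borders, checks that certain columns are independent because their coordinates sum to $1$, and for $\Lsha_6$ splits into cases according to $\sum_i z_i$; these same checks are what pin down $\delta$. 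You correctly name this forced-dependency accounting as ``the main obstacle,'' but none of it is carried out, and the general identities you would replace it with are either vacuous here or incorrect.

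There is also a bookkeeping error about what the proposition asserts. Table \ref{tab:tyz_alt} contains only the rank-one cases $\Lsha_{5a}$, $\Lsha_{5b}$, $\Lsha_{5a}+\Lsha_{5b}$, $\Lsha_6(2n)$, $\Lsha_{7a}$, $\Lsha_{7b}$, $\Lsha_{7a}+\Lsha_{7b}$, all with $t \in \{1,2\}$ and $\delta \in \{0,1\}$; the ``rows $1$, $2$, $3$'' and the ``$t=0$'' case you single out belong to Table \ref{tab:tyz_tab} and are not part of this proposition. More seriously, the sum rows $\Lsha_{5a}+\Lsha_{5b}$ and $\Lsha_{7a}+\Lsha_{7b}$, which are in the table and are needed for the $0.75$ densities in Theorem \ref{thm:main}, are not $\det M_x$ for any single $M_x$ of Section 2, so your opening reduction via Theorem \ref{thm:tab_power} does not cover them: one must first show that the sum of the two bordered determinants equals the determinant of a single matrix carrying both border vectors (this uses the symmetry of the $2r \times 2r$ core so that the cross terms cancel over $\mathbb{F}_2$), which is precisely where the paper's proof begins. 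As written, the proposal is a plausible plan whose load-bearing steps are either deferred or incorrect as stated.
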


\begin{center}
\begin{table}
\begin{tabular}{l | c | c | c | c | c | c | c}
Function $F$ & \,\, $n_0$ \,\, & \,\,$t$ \,\,& $T_1$ & $T_2$ & $Q_1$ & $Q_2$ & \,\, $\delta$\,\, \\
\hline
$\Lsha_{5a}(n)$& 5 & 1 & (-2) & (2) & (-1) & (2) &  1 \\
$\Lsha_{5b}(n)$& 5 &1 & (1) & (-1) & (-1) & (2) &  1 \\
$\Lsha_{5a}(n) + \Lsha_{5b}(n)$& 5 & 1 & (-2) & (-2) & (-1) & (2) &  1 \\
$\Lsha_6(2n)$ & 3 or 7 & 1 & (2) & (-2) & (-2, -1) & (2) & 1 \\
$\Lsha_{7a}(n)$ & 7  & 2 & (-2, 1) & (1, -2) & (-1) & (2) & 0  \\
$\Lsha_{7b}(n)$ & 7 & 2 & (-2, -1) & (1, 1) & (-1) & (2) & 0 \\
$\Lsha_{7a}(n) + \Lsha_{7b}(n)$ & 7 & 2 & (-2, -1) & (1, -2) & (-1) & (2) & 0
\end{tabular}\\[10pt]
\caption{Reformulating $\Lsha_x$ so Theorem \ref{thm:ksd_ugly} can be applied}\label{tab:tyz_alt}
\end{table}
\end{center}

\begin{proof}
We start by looking at $\Lsha_{5a} + \Lsha_{5b}$, which per Table \ref{tab:tyz_tab} is given as the determinant of the matrix
\[\left( \begin{array}{lll}A + A^{\top} & A^{\top} & \yy + \zz \\ A & D_z & \yy \\ \yy^{\top} + \zz^{\top} & \yy^{\top} & 0 \end{array} \right)\]
We see that $(0, \zz, 1)$ is in the column space of this matrix as the sum of the first $2r$ columns (where $0$ is a $r$ dimensional vector and $1$ is a scalar). Then this determinant is nonzero if and only if
\[\left( \begin{array}{llll}A + A^{\top} & A^{\top} & \yy + \zz & 0 \\ A & D_z & \yy  & \zz \\ \yy^{\top} + \zz^{\top} & \yy^{\top} & 0 & 1 \end{array} \right)\]
has rank $2r + 1$. But the rank of this equals that of
\[\left( \begin{array}{llll}A + A^{\top} & A^{\top} & \yy + \zz & 0 \\ A & D_z + \zz \cdot \zz^{\top} & \yy + \zz  & \zz \\ \yy^{\top} + \zz^{\top} & \yy^{\top} + \zz^{\top} & 1 & 1 \end{array} \right)\]
We note that $(0, \zz, 1)$ is a linearly independent column, as all the others have their first $2r$ entries summing to $0$. Then this has rank $2r + 1$ if and only if
\[\left( \begin{array}{lll}A + A^{\top} & A^{\top} & \yy + \zz \\ A & D_z + \zz \cdot \zz^{\top} & \yy + \zz \\ \yy^{\top} + \zz^{\top} & \yy^{\top} + \zz^{\top} & 1 \end{array} \right)\]
has corank one. But notice that the $2r \times 2r$ submatrix in the upper left hand corner has corank at least two, as it is alternating and has rows summing to zero. Then this last matrix has corank one if and only if the $2r \times 2r$ left corner submatrix has corank two and $(\yy + \zz, \yy + \zz)$ is outside the column space of this corner submatrix. But this is the case if and only if 
\[\left( \begin{array}{lll}A + A^{\top} & A^{\top} & \yy + \zz \\ A & D_z + \zz \cdot \zz^{\top} & \yy + \zz \\ \yy^{\top} + \zz^{\top} & \yy^{\top} + \zz^{\top} & 0 \end{array} \right)\]
has corank one. This is the form of $M^{alt}$ given by the data in the third row of Table \ref{tab:tyz_alt}. That $\delta = 1$ for this $M^{alt}$ comes from the fact that $\vv' + \vv$ is zero while $\vv = (\yy, 0, 1)$ and $\vv_1 = (\yy+ \zz, \yy + \zz, 0)$ are typically independent.

The same argument works for $\Lsha_{5a}$ and $\Lsha_{5b}$, so we have dealt with $n \equiv 5\, (8)$.

Next consider $\Lsha_{7a} + \Lsha_{7b}$, which per Table \ref{tab:tyz_tab} is given as the determinant of the matrix
\[\left( \begin{array}{llll}A + A^{\top} & A^{\top} & \yy + \zz & \yy \\ A & D_z & 0 & \yy \\ \yy^{\top} + \zz^{\top} & 0 & 0 & 0 \\ \yy^{\top} & \yy^{\top} & 0 & 0 \end{array} \right).\]
Noting that $(0, \zz, 0, 1)$ is the sum of columns $r+1$ to $2r$, we get that this matrix has the same rank as that of
\[\left( \begin{array}{lllll}A + A^{\top} & A^{\top} & \yy + \zz & \yy  & 0\\ A & D_z + \zz \cdot \zz^{\top} & 0 & \yy +\zz & \zz \\ \yy^{\top} + \zz^{\top} & 0 & 0 & 0 & 0\\ \yy^{\top} & \yy^{\top} +\zz^{\top} & 0 & 1 & 1 \end{array} \right).\]
But the last column is still the sum of columns $r+1$ to $2r$, so the rank of this equals the rank of
\[\left( \begin{array}{lllll}A + A^{\top} & A^{\top} & \yy + \zz & \yy  \\ A & D_z + \zz \cdot \zz^{\top} & 0 & \yy +\zz  \\ \yy^{\top} + \zz^{\top} & 0 & 0 & 0 \\ \yy^{\top} & \yy^{\top} +\zz^{\top} & 0 & 1  \end{array} \right).\]
So $\Lsha_{7a} + \Lsha_{7b}$ equals the determinant of this matrix. The $1$ in the lower right corner does not contribute to the determinant, as the $2r+1 \times 2r + 1$ upper left minor of this matrix is alternating and hence of positive corank. Removing this $1$, we get the $M^{alt}$ given by the seventh row of Table \ref{tab:tyz_alt}. The same argument works for $\Lsha_{7a}$ and $\Lsha_{7b}$. In these cases, we see that $\vv$, $\vv'$, $\vv_1$, and $\vv_2$ are typically independent, so $\delta = 0$ for this $M^{alt}$

Next up, we see that $\Lsha_6$ is nonzero if and only if
\[ \left( \begin{array}{lllll} A + A^{\top} + D_{y+z}  & A^{\top} & \yy & \yy + \zz & 0 \\
					A & D_z & \yy & 0 & \zz \\
					\yy^{\top} & \yy^{\top} & 0  & 1 & 1 \end{array}  \right)\]
has rank $2r + 1$. This has the same rank as
\[\left( \hspace{-7pt}\begin{array}{lllll}\begin{array}{l}(\yy + \zz) \cdot (\yy + \zz)^{\top} \\ \,\,\,\,\,\,\,\,\,\, +  \yy \cdot \yy^{\top} + D_{ z} \end{array}  & \begin{array}{l}A^{\top}  \\ \,\end{array} & \begin{array}{l}\zz  \\ \,\end{array} & \begin{array}{l}\yy + \zz  \\ \,\end{array} & \begin{array}{l}0  \\ \,\end{array}\\

				\begin{array}{l}A\end{array}     & D_z + \zz \cdot \zz^{\top} & \yy + \zz & 0 & \zz \\
				\begin{array}{l}\zz^{\top}\end{array} & \yy^{\top} + \zz^{\top} & 0 & 1 & 1
					 \end{array} \right).\]
If $\sum_i z_i = 0$, the sum of the first $2r$ columns is $(0, \zz, 1)$, and $(\yy + \zz, 0, 1)$ is linearly independent from the the other columns by virtue of having first $r$ elements not summing to zero. If $\sum_i z_i = 1$, we get that $(0, \zz, 1)$ is independent and $(\yy + \zz, 0, 1)$ is dependent. Either way, we see that $\Lsha_6$ is nonzero if and only if the $M^{alt}$ given in Table \ref{tab:tyz_alt} has corank exactly one, which we can calculate to be the $\delta$ for $M^{alt}$. This finishes the proof.
\end{proof}

With Proposition \ref{prop:to_ksd}, Proposition \ref{prop:sel3}, and Theorem \ref{thm:ksd_ugly}, we can easily prove Theorem \ref{thm:main}.
\begin{proof}[Proof of Theorem \ref{thm:main}]
Choose $n_0 \in \{5, 6, 7\}$. We know that, among the positive squarefree integers $n$ equaling $n_0$ mod $8$, the proportion with $\SelT(E^{(n)})$ of rank three is $\alpha_1$, where $\alpha_k$ is as defined in Theorem \ref{thm:ksd_ugly}. At the same time, if $x$ agrees with $n_0$, then the proportion of the squarefree integers equaling $n_0$ mod $8$ that have $\Lsha_x$ nonzero is $\alpha_0$ per Proposition \ref{prop:to_ksd} and Theorem \ref{thm:ksd_ugly}. Since $\alpha_0/\alpha_1 = 0.5$, and since $\Lsha_x(n)$ is nonzero only if $\SelT(E^{(n)})$ is of rank three, we get that $\Lsha_x$ is nonzero for half of the $n$ with $\SelT(E^{(n)})$ of rank three

With $\Lsha_{5a} + \Lsha_{5b}$, we also see that exactly one of $\Lsha_{5a}(n)$ and $\Lsha_{5b}(n)$ is nonzero for half of the $n \equiv 5\, (8)$ with $\SelT(E^{(n)})$ of rank three. We then can calculate that the chance that either $\Lsha_{5a}(n)$ or $\Lsha_{5b}(n)$ is nonzero, given that $\SelT(E^{(n)})$ is of rank three, is $0.75$. A similar analysis works for $n \equiv 7\, (8)$, and this gives the theorem.
\end{proof}

At this point, the only thing we need to do is to prove Theorem \ref{thm:ksd_ugly}. This will take the next two subsections.

\subsection{Theorem \ref{thm:ksd_ugly} for bit assignments}
For this section, we presume that we have $D$, $n_0$, $B$, $T_1$, $T_2$, $Q_1$, $Q_2$, and $d_{diag}$ as in Theorem \ref{thm:ksd_ugly}, so that $M^{alt}(n)$ is defined for $n = p_1 \cdot \dots \cdot p_r \in X_{n_0, D}$. Take $c$ to equal the number of prime divisors of $D$. Then $M^{alt}$ is determined by $(c + 2)(r-1) + \frac{1}{2}r(r-1)$ total bits. The first part of this sum comes from the Legendre symbols
\[\left(\frac{-1}{p_i} \right),\,\, \left(\frac{2}{p_i} \right),\,\,\left(\frac{q}{p_i} \right)\]
for $1 \le i \le r$ and $q$ any prime factor of $D$, subject to the conditions on $n$ mod $8D$. The second part comes from 
\[\left( \frac{p_i}{p_j} \right)\]
for all $i, j$ with $1 \le i < j \le r$. Then a more algebraic version of Theorem \ref{thm:ksd_ugly} is the following.
\begin{prop}
\label{prop:swin_main}
Suppose we are in the context of Theorem \ref{thm:ksd_ugly}, so $M^{alt}$ is determined by $ \frac{1}{2}(r + 2c + 4)(r-1)$ bits. Then
\[\lim_{r \rightarrow \infty} \frac{\text{Number of bit assignments with } \text{crnk}(M^{alt}) = k + \delta}{2^{\frac{1}{2}(r + 2c + 4)(r-1)}}\]
\[ = \begin{cases} \alpha_k &\text{if } k \ge 0 \text{ and } k \equiv t + \delta \mod 2 \\ 0 & \text{otherwise.}\end{cases}\]
\end{prop}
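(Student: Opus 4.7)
The plan is to prove Proposition \ref{prop:swin_main} by a Markov-chain argument in the spirit of Swinnerton-Dyer \cite{Swin08} and Kane \cite{Kane13}, adapted to the alternating structure of $M^{alt}$ and its systematic $\delta$-shift. The overall strategy is to introduce primes one at a time, view the corank of $M^{alt}$ as a random variable evolving via a Markov chain on the nonnegative integers, identify the stationary distribution with $\{\alpha_k\}$, and verify mixing.

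First I would settle the bit accounting. The off-diagonal Legendre symbols $\left(\frac{p_i}{p_j}\right), \left(\frac{p_j}{p_i}\right)$ with $i < j$ are linked by quadratic reciprocity and contribute $\binom{r}{2}$ independent bits. The ``character'' bits $\left(\frac{-1}{p_i}\right), \left(\frac{2}{p_i}\right), \left(\frac{q}{p_i}\right)$ for $q \mid D$ give $(c+2)r$ raw bits, subject to the $c+2$ linear constraints imposed by the requirement that $n/n_0$ be a square mod $8D$, leaving $(c+2)(r-1)$ independent bits. These sum to $(r+2c+4)(r-1)/2$, matching the claim.

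Next I would induct on $r$. Adding $p_r$ to an existing bit assignment on $p_1, \ldots, p_{r-1}$ introduces $r-1$ new off-diagonal bits together with $c+2$ new character bits, all subject to the accumulated constraints. Conditioning on the character bits, the off-diagonal bits appear as two symmetric new block-columns in the $A$ slot, and an elementary linear-algebra analysis shows that the corank of $M^{alt}$ changes by $0$ or $\pm 2$ with transition probabilities depending only on the current corank and on a few bilinear forms in the character bits. This produces a Markov chain on $\{k + \delta : k \geq 0,\ k \equiv t + \delta \pmod 2\}$, with parity preserved because the ambient matrix is alternating. The $\delta$-shift itself is explained by the definition of $\delta$: the column-sum vectors $\mathbf{v}, \mathbf{v}', \mathbf{v}_1, \ldots, \mathbf{v}_t$ satisfy systematic linear dependencies of rank at least $\delta$ for every bit assignment, and the definition makes $\delta$ the minimal such deficiency. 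The hypotheses that $b_1, b_2, -b_1 b_2$ are nonsquares and that one of $b_1, b_2$ is not $-$square are precisely what prevent further parasitic dependencies (as flagged by Example \ref{ex:AAT}) from inflating the generic corank.

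The main obstacle is the mixing step. One must show that the Markov chain produced in the inductive analysis converges to the Cohen--Lenstra distribution $\{\alpha_k\}$ for uniform random $\mathbb{F}_2$-alternating matrices, within the correct parity class, as $r \to \infty$. This requires (i) computing the transition kernel explicitly enough to recognize $\{\alpha_k\}$ as its unique stationary distribution on the relevant parity class, and (ii) proving that the transition probabilities remain uniformly bounded away from $0$ and $1$ across all accessible coranks, so that any concentration of mass at small or large corank is smoothed out over $r$ steps. Step (ii) is where the technical nondegeneracy conditions on $b_1, b_2, Q_1, Q_2, T_1, T_2$ enter essentially: they ensure that the new character bits introduced at each step genuinely randomize the rows $R_1, R_2$ and the diagonals of $B_1, B_2$ against the existing matrix, giving the chain the aperiodicity and irreducibility needed for geometric mixing.
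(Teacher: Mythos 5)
Your proposal correctly identifies the Swinnerton-Dyer/Kane framework, the parity preservation, the $\delta$-shift via systematic column-sum dependencies, and the $0, \pm 2$ increment structure of the chain. The bit accounting also matches the paper. However, you mischaracterize the main technical obstacle, and this is a genuine gap.

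You frame the hard step as (ii) ``proving that the transition probabilities remain uniformly bounded away from $0$ and $1$,'' i.e., establishing aperiodicity and irreducibility of the chain so it mixes. But this is neither the issue nor true as stated: the ideal transition probabilities $(2^{-2k-1},\ 3\cdot 2^{-k} - 5\cdot 2^{-2k-1},\ 1 - 3\cdot 2^{-k} + 2^{-2k+1})$ actually \emph{do} hit $0$ at corank $0$, which is fine and expected. The real problem, which your proposal does not engage with, is that the process is \emph{not} a clean Markov chain in the first place. When you pass from $M^{alt}_{[s]}$ to $M^{alt}_{[s+1]}$, the new column pieces $\ww_0, \ww_1$ are determined by Legendre symbols $\left(\frac{p_i}{p_{s+1}}\right)$ and characters at $p_{s+1}$, and these are \emph{not} independent of the already-chosen bits: the diagonal blocks $B_1, B_2$ and the rows $R_1, R_2$ couple the new column to the existing character data. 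The transition probabilities therefore depend on more than the current corank. The substantive content of the paper's argument is a lemma (Lemma \ref{lem:swin}) showing that the set of bit assignments for which there exist linear functionals $h, h'$ in the annihilator of the column space of $M^{alt}_{[s]}$ with $h(\ww_0) + h'(\ww_1) \equiv 0$ over the allowed $(\ww_0, \ww_1)$ — the configurations where the transition deviates from the ideal kernel — has measure $\mathcal{O}(0.875^s)$. Proving this requires a delicate case analysis over the subsets $S_0, S_1, S_1'$ encoding $h$ and $h'$, and it is precisely here that the hypotheses on $b_1, b_2, -b_1 b_2$ not being squares (and one not being $-$square) are consumed. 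Once that exponential bound is in hand, one invokes Swinnerton-Dyer's observation that an approximate Markov chain with exponentially decaying deviation from a clean ergodic kernel still converges to that kernel's stationary distribution. Your proposal collapses all of this into a claim about bounded transition probabilities, so the argument as written does not close.

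One smaller point: the paper also performs two preliminary normalizations before starting the chain — augmenting $T_1, T_2, B$ so that the column sums $\vv, \vv'$ appear as explicit columns (incrementing $t$ to $t' = t+2$), then stripping off rows/columns to reduce to $\delta = 0$ and to rule out the degenerate $S_t$ case. You gesture at the $\delta$-shift explanation but don't carry out the reduction that makes the induction on $s$ clean. That is easily repaired; the missing Lemma \ref{lem:swin} analogue is the essential gap.
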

\begin{proof}
Consider $\vv$, the sum of the first $r$ columns of $M^{alt}$. The construction of $B_1$ and restrictions on $n$ mod $8D$ force there to be a $d | 2D$ so row $i$ of this vector equals $\left(\frac{d}{p_i}\right)_+$ for $i \le r$. For the next $r$ elements, this column clearly equals $\left(\frac{d_{diag}}{p_i}\right)_+$, and the choice of $n_0$ forces the final $t$ elements of the vector to be constant. Then, by expanding $t$ by one, we can choose $T_1$, $T_2$, and $B$ so $M^{alt}$ has a copy of $\vv$ among the last $t+1$ columns. We can do the same for $\vv'$, the sum of columns $r+1$ to $2r$, again incrementing $t$. Call $R_1'$, $R_2'$, and $B'$ the resulting matrices from this expansion, with $t' = t+ 2$. Then the corank of $M^{alt}$ is the same as the corank of
\[M^{alt}_S= \left(\begin{array}{lll} B_1[S,S] & (A+ D_{diag})[S,S]^{\top} & R_1'\big[[t'], S\big]^{\top} \\
				(A + D_{diag})[S, S] & B_2[S, S] & R_2'\big[[t'], S\big]^{\top} \\
				R_1'\big[[t'], S\big] & R_2'\big[[t'], S\big] & B'  \end{array}\right)\] 
for $S = [r-1]$. 

We first assume that $\delta = 0$. If $\delta$ is positive, it corresponds to a set $S_t \subseteq [t]$ so that the product of the corresponding divisors in $T_1'$ is a square, the product of the corresponding divisors in $T_2'$ is a square, and the sum of the corresponding rows of $B'$ is zero. Then we can just remove a row and column corresponding to $i \in S_t$, decreasing both $\delta$ and corank by one and thus not effecting the veracity of the proposition. So we can assume $\delta = 0$. 

We also assume that there is no such $S_t$ so that the product of the corresponding divisors in $T_l$ is a square for both $l = 1$ and $l = 2$. Suppose otherwise. Via row and column operations, we can assume that it is true for $S_t = \{1\}$. Then $\vv_1$ is nonzero somewhere in its last $t' - 1$ rows and is zero in its first $2r + 1$ rows. Say it is nonzero at row $2r + 2$. We can then add $\vv_1$ to various other columns, and add $\vv_1^{\top}$ to various other rows, to make $R_1'$ and $R_2'$ zero in the second column and make $B'$ zero in the second column and second row except at $(B')_{1,2}$ and $(B')_{2,1}$. We can then remove both these rows and their corresponding columns without changing either the corank of our matrix or $\delta$. Then the assumption does not change the veracity of the proposition.

Our approach now is quite similar to Swinnerton-Dyer. We see that the distribution of $M^{alt}_{[s]}$ only depends on $s$, and not $r$, so long as $r > s$. Then to find the limit of the distribution of coranks of $M^{alt}_{[r-1]}$ as $r$ heads towards infinity, we just find the distribution of coranks of $M^{alt}_{[s]}$ as $s$ heads to infinity, with $r$ serving no role.

Suppose $M^{alt}_{[s]}$ is given and has corank $k_s$, and we wish to find the distribution of the corank $k_{s+1}$. In $M^{alt}_{[s+1]}$, take $\ww_0$ to be column $s+1$ with rows $s + 1$ and $2s+2$ removed, and take $\ww_1$ to be column $2s+2$ with the same rows removed. Then, taking
\[b = \left( \frac{dn/p_{s+1}}{p_{s+1}} \right)_+,\]
we get that $M^{alt}_{[s+1]}$ is equivalent to 
\[\left( \begin{array}{lll} M^{alt}_{[s]} & \ww_0 & \ww_1 \\[3pt] \ww_0^{\top} & 0 & b \\ \ww_1^{\top} & b & 0 \end{array}\right).\]
Notice that $b$ is independent from all the other bits in $M^{alt}_{[s+1]}$. If either $\ww_0$ or $\ww_1$ is not in the column space of $M^{alt}_{[s]}$, $b$ has no effect on the rank. However, if both $\ww_0$ and $\ww_1$ are in the column space, then $k_{s+1} - k_s = 2$ for $b = 0$ and $k_{s+1} - k_s = 0$ otherwise. Take $E(\ww)$ to be true if $\ww$ is in the column space of $M^{alt}_{[s]}$ and false otherwise. Then, using the fact that $b$ is independently distributed, we get
\begin{itemize}
\item $k_{s + 1} - k_s = +2$ with probability 
\[0.5 \cdot \mathbb{P}\big(E(\ww_0) \wedge E(\ww_1)\big).\]
\item $k_{s+1} - k_s = 0$ with probability
\[0.5 \cdot \mathbb{P}\big(E(\ww_0)\big) + 0.5 \cdot \mathbb{P}\big(E(\ww_0) \wedge \neg E(\ww_1)\big)\]
\[+\mathbb{P}\big(\neg E(\ww_0) \wedge \big(E(\ww_1) \vee E(\ww_0 + \ww_1)\big)\big). \]
\item $k_{s+1} - k_s = -2$ with probability
\[ \mathbb{P}\big(\neg E(\ww_0) \wedge \neg E(\ww_1) \wedge \neg E(\ww_0+ \ww_1)\big).\]
\end{itemize}
If $\ww_0$ and $\ww_1$ were pulled randomly from the space of $2s + t'$ dimensional vectors, we would easily get that these probabilities are
\begin{itemize}
\item $2^{-2k_s -1}$,
\item $3 \cdot 2^{-k_s} - 5 \cdot 2^{-2k_s -1}$, and
\item $1 - 3 \cdot 2^{-k_s} + 2^{-2k_s + 1}$
\end{itemize}
respectively. In this case, we would have a Markov chain, and with the clear fact that $k_s \equiv t' \mod 2$, we would get the distribution in the proposition.

Sadly, $\ww_0$ and $\ww_1$ do not come from this distribution. However, except in a vanishingly small percentage of cases, this turns out not to effect the distribution. Take $M^{alt}_{[s]}$ to be predetermined. Given this, take $Y_s$ to be the set of possible tuples $(\ww_0, \ww_1)$, and take $H$ to be the space of linear functionals on $2s + t'$ dimensional vectors that vanish on the column space of $M^{alt}_{[s]}$. Then
\[\mathbb{P}\big(E(\ww_0) \wedge E(\ww_1)\big) = \frac{1}{4^{k_s}|Y_s|}\sum_{(\ww_0, \ww_1) \in Y_s}\sum_{h, h' \in H} (-1)^{h(\ww_0) + h'(\ww_1)}.\]
In particular, we see that this probability is $4^{-k_s}$ unless there is a choice of $h, h' \in H$ not both zero so that $h(\ww_0) + h'(\ww_1)$ is zero for all $(\ww_0, \ww_1) \in Y_s$. Doing this for the other transition probabilities, we find that $k_{s+1} - k_s$ assumes the nice transition probabilities given above unless there is such a choice of $h$ and $h'$. But we have the following lemma.

\begin{lem}
\label{lem:swin}
Given $M^{alt}_{[s]}$ with no set $S_t \subseteq [t']$ which corresponds to a square in both $Q_1$ and $Q_2$, take $H$ to be the space of linear functionals on $2s + t'$ variables that vanish on the column space of $M^{alt}_{[s]}$, and take $Y_s$ to be the vector space of possible $(\ww_0, \ww_1)$. Across the $2^{\frac{1}{2}(r + 2c + 4)(r - 1)}$ bit assignments, the proportion of matrices with $h, h' \in H$ not both zero so that $h(\ww_0) + h(\ww_1)$ is zero for $(\ww_0, \ww_1) \in Y_s$ is 
\[\mathcal{O}\left(0.875^{s}\right)\]
 with constant depending on $c$ but not $s$.
\end{lem}

This lemma is very similar to the lemmas four through seven in \cite{Swin08}.
\begin{proof}[Proof of Lemma \ref{lem:swin}]
The element $h$ can be specified by a subset $S$ of $[2s + t']$ via $h(\ww) = \sum_{i \in S} w_i$, and $h'$ can be specified by another subset $S'$. Take $S_0$ to be the set of $i \le s$ so $i \in S$, take $S_1$ to be the set of $i \le s$ so $i + s \in S$, and take $S_2$ to be the set of $i \le t'$ so $2s + i \in S$, similarly defining $S'_0$, $S'_1$, and $S'_2$. We also take $\ww_{0, i}$ to refer to column $i$ of $M^{alt}_{[s]}$, and $\ww_{1, i}$ to refer to column $i + s$ of $M^{alt}_{[s]}$, for $i \le s$.

For $h, h'$ to obey the conditions of the lemma, we need $S_1$ and $S_0'$ to be equal, as we can otherwise find a $\left(\frac{p_i}{p_{s+1}}\right)_+$ in one but not the other, making $h(\ww_0) + h'(\ww_1)$ nonzero for some choice of $\ww_0, \ww_1$. We have five cases, based on the number of distinct nonempty sets among $S_0, S_1, S'_1$.
\begin{description}
\item [No distinct nonempty sets]

In this case, the $h, h'$ correspond to subsets of $[t']$, of which there are a constant number. At the same time, the assumption we made about $S_t$ in the lemma's statement implies that either $h(\ww_{0, i})$ or $h(\ww_{1, i})$ is of the form $\left( \frac{d}{p_i} \right)_+$ for some nonsquare $d | 2D$ if $h$ is nontrivial. These bits are independent for all $i \le s$, so the probability of this case is $\mathcal{O}\left(0.5^s\right)$. If $h$ is trivial, $h'$ is nontrivial, and the same approach works for it.

\item [One distinct nonempty set]

We recall that we assumed that none of $b_1$, $b_2$, and $-b_1b_2$ were squares when we were first defining $M^{alt}$. This is where this comes into play.

For a given $h, h'$, call the distinct nonempty set $T$. Suppose for the moment that an odd number of the $S_0, S_1, S'_1$ are equal to $T$. For any $p_i$, we can choose a $p_{s+1}$ so $p_i/p_{s+1}$ is a square mod $8d$ and so $\left(\frac{p_i}{p_j} \right) = \left( \frac{p_{s+1}}{p_j} \right)$ for all $j \ne i$. In this case, $(\ww_0, \ww_1)$ is very similar to $(\ww_{0, i}, \ww_{1, i})$, with the only potential difference being at positions $i$ and $i+s$ of these vectors. At these positions, we use the definitions of $B_l$ and quadratic reciprocity to say
\[\begin{array}{ll} 
		\left(\frac{b_1}{p_i}\right) & = (\ww_{0, i})_i - (\ww_0)_i \\
		\left(\frac{-1}{p_i}\right) & = \big((\ww_{0, i})_{i+s} + (\ww_{1, i})_i \big) - \big((\ww_{0})_{i+s} + (\ww_{1})_i\big) \\
		\left(\frac{b_2}{p_i} \right)_+ & = (\ww_{1, i})_{i+s} - (\ww_1)_{i+s} \end{array}\]
In the case that only $S_0$ is nonempty, for example, we see that if $i \in T$ and $\left(\frac{b_1}{p_i} \right)_+ = 1$, then $h(\ww_{0, i}) - h'(\ww_{1, i}) \ne 0$, against our assumption. The other cases are similar:
\begin{itemize}
\item If $S_1 = T$ and $S_0 = S_1' = \emptyset$, then $\left( \frac{-1}{p_i} \right)_+ = 0$ for $i \in T$.
\item If $S_1' = T$ and $S_0 = S_1 = \emptyset$, then $\left(\frac{b_2}{p_i}\right)_+ = 0$ for $i \in T$.
\item If $S_0 = S_1 = S_1' = T$, then $\left(\frac{-b_1b_2}{p_i} \right)_+ = 0$ for $i \in T$.
\end{itemize}
At the same time, given $j \in T$, at least one of $h'(\ww_{0, i})$, $h(\ww_{0, i})$, and $h(\ww_{1, i})$ would be effected by a change in $\left(\frac{p_i}{p_j} \right)$ for any $i \ne j$. Together, we have on the order of $|T| + s$ independent bits that need to be properly set for $h, h'$ to lie in $H$. There are $\binom{s}{|T|}$ ways to choose $T$ of size $|T|$, and so we get that the probability of any such $h, h'$ pair is on the order of
\[\mathcal{O}\left(\sum_{|T|} \binom{s}{|T|} 2^{-s - |T|} \right) = \mathcal{O}(0.75^s)\]
by the binomial theorem.

So we now suppose that an even number of $S_0, S_1, S'_1$ are nonempty. Since we already dealt with the all-empty case, this means we need to consider the case where exactly one of these is empty. The method will be clear from just considering the case where $S'_1$ is empty.

We note that, if $p_i/p_j$ is a square mod $8D$, and if $i$ is in $T$ while $j$ is not in $T$, then
\[h'(\ww_{0, i}) + h'(\ww_{0, j}) = \left(\frac{b_1}{p_i} \right).\]
Exactly one half of the classes of $\QQ^{\times}/(\QQ^{\times})^2$ mod $8D$ correspond to primes with $\left(\frac{b_1}{p}\right)_+ = 0$; then either at most one half of the remainder is ever attained by $p_i$ with $i \in T$, or at most one half the remainder is ever attained by $p_j$ with $j$ outside $T$. Then the probability of the $h'(\ww_{0, i})$ equaling zero everywhere is on the order of
\[\mathcal{O}\left(0.75^{\text{min}(|T|, s - |T|)}\right).\]

At the same time, the values of the $\left(\frac{p_i}{p_j}\right)$ are completely independent from to mod $8D$ information, and we get the same $2^{-s}$ as before from $h(\ww_{0, i})$ equaling zero everywhere. The binomial theorem gives that the total probability of there being an $h, h'$ fitting this case is on the order of 
\[\mathcal{O}(0.875^s)\]

\item [Two distinct nonempty sets]
Take $T$ to be the set of indices in an odd number of $S_0$, $S_1$, and $S'_1$. As before, we find that, for $i \in T$, $h(\ww_{0, i}) + h'(\ww_{1, i})$ is determined by $\left(\frac{d}{p_i}\right)$, with $d$ one of $-1$, $b_1$, $b_2$, and $-b_1b_2$. At the same time, via the $\left(\frac{p_i}{p_j}\right)$, we see that two of $h'(\ww_{0, i})$, $h(\ww_{0, i})$, and $h(\ww_{1, i})$ are independently determined. We see that there are on the order of $2^s \binom{s}{|T|}$ choices of $h, h'$ in this case with $T$ of a given size, so the total probability is on then on the order of
\[\mathcal{O} \left(\sum_{|T|} 2^s \binom{s}{|T|}  \cdot 2^{-2s - |T|}\right) = \mathcal{O}(0.75^s).\]

\item [Three distinct but dependent nonempty sets]
By the word dependent in this case, we mean that $S'_1$ is the symmetric difference of $S_0$ and $S_1$, with $S_0$ and $S_1$ distinct and nonempty. In this case, $h(\ww_{0, i}) + h'(\ww_{1, i})$ is determined by
\begin{itemize}
\item $\left(\frac{-b_1}{p_i} \right)$ for $i$ in $S_0 \cap S_1$, 
\item $\left(\frac{b_1b_2}{p_i} \right)$ for $i$ in $S_0 \cap S_1'$, and 
\item $\left(\frac{-b_2}{p_i} \right)$ for $i$ in $S_1 \cap S_1'$.
\end{itemize}

We assumed that at least one of $b_1, b_2$ was not a square times $-1$, so at least two of $-b_1$, $-b_2$, $b_1b_2$ are not squares. Then, taking $T$ to be the smallest of $S_0, S_1, S'_1$, there will be at least $|T|$ independent conditions. These add with the $2s$ bits corresponding to $\left( \frac{p_i}{p_j}\right)$, giving a total probability of $\mathcal{O}(0.75^s)$.

\item [Three independent sets]
Take $T$ to be the set of indices in an odd number of $S_0, S_1, S'_1$. Then the value $h(\ww_{0, i}) + h'(\ww_{1, i})$ is again determined by a bit of information for $i \in T$. At the same time, we get that all three values $h'(\ww_{0, i})$, $h(\ww_{0, i})$, and $h(\ww_{1, i})$ are independent, for $3s + |T|$ total independent bits determining if $h, h'$ are in $H$. The binomial theorem again gives a total probability of $\mathcal{O}(0.75^s)$. This finishes the case, proving the lemma.
\end{description}
\end{proof}

The end of Swinnerton-Dyer's proof, which shows that a Markov process with exponentially decaying weirdness still converges normally, is then enough to prove the proposition.
\end{proof}

\begin{ex}
\label{ex:AAT}
Consider
\[ \left(\begin{array}{ll} 0 & A^{\top} \\ A & 0 \end{array} \right)\]
with $n$ equal to $3$ mod $4$. In this example, $b_1$ and $b_2$ are both equal to $1$, so Lemma \ref{lem:swin} fails. However, it fails in a predictable way, reflecting a Markov chain with different probabilities. We go through this now.

In this example, $\delta = 2$, and we consider the corank $k_s$ of $A_{[s]} = A\big[[s], [s]\big]$ for $s < r - 1 $. In $A_{[s+1]}$, take $\ww_{0}$ to be the first $s$ elements of the final column, take $\ww_1^{\top}$ to be the first $s$ elements of the final row, and take $b$ to be element $s+1$ of column $s+1$. $b$ is still independently distributed. Taking $E(\ww)$ to be true if $\ww$ is in the column space of $A_{[s]}$ and taking $E^{\top}(\ww)$ to be true if $\ww^{\top}$ is in the row space of $A_{[s]}$, we get that the probability that $k_{s+1} - k_s$ is $+1$ is $0.5\mathbb{P}(E(\ww_0) \wedge E^{\top}(\ww_1))$, the probability that it is $-1$ is $\mathbb{P}(\neg E(\ww_0) \wedge \neg E^{\top}(\ww_1))$, and the probability that it is $0$ is what remains.

These probabilities will be nicely calculable unless there is a functional $h$ that is trivial on the column space of $A_{[s]}$ and another functional $h'$ killing the row space so that $h(\ww_0) + h'(\ww_1) = 0$ for all choices of $\ww_0$ and $\ww_1$ with one of $h, h'$ nontrivial. From considering the $\left(\frac{p_i}{p_{s+1}}\right)$, we see that the subset of $[s]$ associated with $h$ and $h'$ must be the same. Call it $S$. As in the second case of the proof of the lemma above, if $i \in S$, then $h, h'$ can only satisfy the condition on the column and row space if $\left(\frac{-1}{p_i}\right) = +1$. The same binomial argument applies as in the lemma.

Then, as $s$ approaches infinity, the probability that $k_{s+1} - k_s$ is $+1$ approaches $2^{-2k_s - 1}$, the probability that it is $-1$ approaches $1 - 2^{-k_s + 1} + 2^{-2k_s}$, and the probability that it is $0$ approaches $2^{-k_s + 1} - 3 \cdot 2^{-2k_s - 1}$. But notice that $k$ models the rank of four torsion in the class group of $\QQ(\sqrt{-n})$. The next part of our analysis, which converts density over bit distributions to natural density, will still apply. Solving the Markov chain we get that, among positive squarefree $n \equiv 3\,(4)$, the proportions with $4$-class rank $k$ is
\[2^{-k^2} \cdot \prod_{i = 1}^k \left(1 - 2^{-i}\right)^{-2} \cdot \prod_{i=1}^{\infty} \left(1 - 2^{-i}\right).\]
A very similar argument works for other imaginary quadratic fields, giving the same distribution.

The first evidence for this distribution was found by Gerth in 1984 \cite{Gert84}, and the distribution was first proved by Fouvry and Kl{\"u}ners in 2007 \cite{Fouv07}. The proof by Fouvry and Kl{\"u}ners was based heavily on Heath-Brown's work in \cite{Heat94}, so this example is another instance of the Kane and Swinnerton-Dyer approach substituting for Heath-Brown's approach.
\end{ex}

\subsection{Converting to natural density}
\label{ssec:kane}
In this section, we use Kane's work to finish the proof of Theorem \ref{thm:ksd_ugly}. We begin by deriving a slight variant of Proposition 9 from his paper. We adopt his notation, taking $S_{N, r, D}$ to denote the set of ordered tuples of primes $(p_1, \dots, p_r)$ whose product is at most $N$ so that none of the $p_i$ divide $2D$. We also denote by $G$ the $\mathbb{F}_2$ vector space $(\Z/8D\Z)^{\times}\big/ \left((\Z/8D\Z)^{\times}\right)^2$. 

\begin{prop}\label{prop:prop9}\cite[Proposition 9]{Kane13}
Take $D$ to be an odd squarefree integer as in Theorem \ref{thm:ksd_ugly}, and choose any $\epsilon > 0$. Then there exists a $K > 0$ depending on $D$ and $\epsilon$ so that, for any choice of $r > 0$ and any $N$ satisfying $(\log \log N)/2 < r < 2 \log \log N$, and for any choice of an alternating $r \times r$ matrix $E$ over $\mathbb{F}_2$ and any choice of a function $F: G^{r} \rightarrow [-1, 1]$,  we have
\begin{equation}
\label{eq:prop9}
\left| \frac{1}{r! \cdot N} \sum_{S_{N, r, D}} F \circ \pi (p_1, \dots, p_r)  \prod_{0 < i < j \le r}\left( \frac{p_i}{p_j} \right)  \right| < K \epsilon^m
\end{equation}
where $m$ is the number of rows of $E$ that are not identically zero and $\pi$ is the natural projection $S_{N, r, D} \rightarrow G^r$.
\end{prop}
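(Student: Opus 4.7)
My plan, assuming (as the appearance of $m$ in the bound forces) that the product in \eqref{eq:prop9} should read $\prod_{0 < i < j \le r}\left(\tfrac{p_i}{p_j}\right)^{E_{ij}}$, is to reduce this to an iterated character sum estimate where each nonzero row of $E$ produces a nontrivial Dirichlet character on one of the $p_i$, giving a power-saving contribution of $\epsilon$ per nonzero row.

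The first step is to decouple the double product over pairs into a single product over primes. For each $j$, set $a_j = \prod_{i < j, \, E_{ij} = 1} p_i$, so that
\[
\prod_{0 < i < j \le r} \left(\frac{p_i}{p_j}\right)^{E_{ij}} \;=\; \prod_{j=1}^{r} \left(\frac{a_j}{p_j}\right).
\]
The $j$-th factor is now a Jacobi symbol in $p_j$ alone (given $p_1, \dots, p_{j-1}$). I would then expand $F$ in Fourier characters on $G^r$: since $\|F\|_\infty \le 1$ and $|G|$ depends only on $D$, this introduces at most $|G|^r$ terms each of the form $\psi_1(p_1) \cdots \psi_r(p_r)$ with $\psi_i$ a character on $G$ pulled back through $\pi$, and the hypothesis $r < 2\log\log N$ keeps the number of terms subpolynomial in $N$.

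Next I would treat the resulting sums inductively on $r$, peeling off the innermost variable. Fixing $p_1, \dots, p_{r-1}$ and summing over $p_r$ up to $N/(p_1 \cdots p_{r-1})$, one encounters the character $p \mapsto \psi_r(p)\left(\tfrac{a_r}{p}\right)$. If the $r$-th row of $E$ is nonzero, then $a_r$ is a nontrivial squarefree integer coprime to $2D$, so via quadratic reciprocity the product is a nontrivial Dirichlet character of modulus dividing $8Da_r$, and the Siegel--Walfisz theorem produces a saving that, after summing over the range of $p_r$ allowed by the product condition, yields the factor $\epsilon$ (choosing $K$ large compared to any ineffective Siegel--zero contribution). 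If the $r$-th row is zero, one simply bounds by the prime number theorem and moves on, losing nothing but also gaining no $\epsilon$. Iterating this over $j = r, r-1, \dots, 1$ accumulates exactly $\epsilon^m$ savings — one for each nonzero row.

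The main obstacle is making this character-sum step uniform across the entire range $(\log\log N)/2 < r < 2\log\log N$ and across all alternating $E$. The modulus $8Da_r$ of the character can be as large as $N$, so naive Polya--Vinogradov is useless; one truly needs a Siegel--Walfisz-type bound with a moduli-averaged error term, or equivalently a large-sieve input, to avoid exceptional characters. Additionally, one must control the combinatorial explosion coming from summing over all ordered tuples of primes with product bounded by $N$ — this is where the lower bound $r > (\log\log N)/2$ is essential, since it guarantees that the typical prime $p_i$ in the tuple is a genuine small prime whose contribution is handled by Mertens-type estimates rather than by a single character sum over a long range. Book-keeping the dependence of the constant $K$ on $D$ and $\epsilon$ through this iteration, and verifying that the exceptional Siegel contribution can be absorbed into $K$ without depending on $N$ or $r$, is the technical heart of the argument and is the step I expect to absorb most of the work.
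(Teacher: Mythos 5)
Your plan has a genuine gap at its analytic core. After decoupling to $\prod_j \left(\frac{a_j}{p_j}\right)$ and fixing $p_1,\dots,p_{r-1}$, the modulus $a_r$ of the character you want to sum over $p_r$ is typically of size $N^{1-o(1)}$, and in that regime there is simply no unconditional estimate giving cancellation in $\sum_{p \le x}\left(\frac{a_r}{p}\right)$: Siegel--Walfisz only covers moduli up to a fixed power of $\log x$, and Polya--Vinogradov/Burgess are useless, as you note. The ``large-sieve input'' you mention in passing is not a uniformity refinement to be added later; it is the entire argument, and it is intrinsically \emph{bilinear} --- one must keep two prime variables in play simultaneously so that the double character sum $\sum_{p,q} a_p b_q \left(\frac{p}{q}\right)$ (Heath-Brown's quadratic large sieve, as used in Kane's Lemma~16) supplies the cancellation. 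A one-variable-at-a-time peeling with one factor of $\epsilon$ per nonzero row cannot be pushed through, and the appeal to absorbing ``ineffective Siegel-zero contributions'' into $K$ does not arise in the correct argument: the paper's proof is explicitly arranged so that Siegel zeros never enter.

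Concretely, the paper follows Kane's recursion rather than a single-variable induction. One defines $Q_{max}(D,N_0,r,m)$ as the supremum of the twisted sums (allowing arbitrary quadratic character twists $\chi_i(p_i)$, all $N<N_0$, all $E$ with $m$ nonzero rows, all $F$ defined mod $8D$), peels off the last \emph{two} primes together, and proves, for any $A<N_0$,
\[
Q_{max}(D,N_0,r,m) < C\Bigl( A^{-1/8}\log^2 N + \tfrac{\log\log A}{r}\,Q_{max}(D,N_0,r-1,m-1) + \bigl(\tfrac{\log\log A}{r}\bigr)^2 Q_{max}(D,N_0,r-2,m-2)\Bigr),
\]
where the power-saving term comes from the bilinear estimate when both peeled primes exceed $A$, and the other terms absorb the small primes; taking $A=\log^{16+c}N$ and inducting from $Q_{max}(D,N_0,r,0)\le 1$ gives the $K\epsilon^m$ bound. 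The only new point relative to Kane is writing $\chi_{r-1}(p_{r-1})\chi_r(p_r)F$, restricted to the last two coordinates, as a sum of at most $|G|^2$ product functions $a(p_{r-1})b(p_r)$ --- your global Fourier expansion of $F$ into $|G|^r$ terms is harmless but unnecessary. To repair your write-up you would need to replace the Siegel--Walfisz step by this bilinear mechanism, at which point you are essentially reproducing the paper's proof.
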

\begin{proof}
This is a straightforward variant of Kane's proposition, and it can be proved without dealing with Siegel zeros. Write $Q_{max}(D, N_0, r, m)$ for the supremum of the expression
\[\left| \frac{1}{r! \cdot N} \sum_{S_{N, r, D}} \left( \prod_{0 < i \le r} \chi_i(p_i) \right)F \circ \pi (p_1, \dots, p_r) \prod_{0 < i < j \le r}\left( \frac{p_i}{p_j} \right)  \right| \]
over all $N < N_0$, all $E$ with $m$ nontrivial rows, all $F$ defined mod $8D$, and all choices of quadratic characters $\chi_i$. We are assuming no relation between the nonnegative integers $N_0$ and $r$ here.

Note that $Q_{max}$ always exists, being bounded trivially by $1$ for all $D, N_0, r, m$.
There is a $C > 0$ depending on $D$ so that, for any choice of $A < N_0$, we have
\[ \begin{array}{ll}
Q_{max}(D, N_0, r, m) < C \cdot \bigg( &A^{-1/8}\log^2 N + \frac{\log \log A}{r} Q_{max}(D, N_0, r - 1, m - 1) \\ &+  \left( \frac{\log \log A}{r} \right)^2 Q_{max}(D, N_0, r-2, m-2) \bigg) \end{array}.\]
if $r, m \ge 2$. The argument is very similar to the proof of Lemma 16 in Kane's paper. The only concern is that we need to write 
\[\begin{array}{l}\chi_{r-1}(p_{r-1})\chi_r(p_r)F_{(p_1, \dots, p_{r-2})}(p_{r-1}, p_r)\\[2pt]
 \,\,\,\,\,\, := \chi_{r-1}(p_{r-1})\chi_r(p_r)F \circ \pi(p_1, \dots, p_{r-2}, p_{r-1}, p_r)\end{array}\]
as a finite linear combination of products $a(p_{r-1})b(p_r)$. This is very easy to do if $F_{(p_1, \dots, p_{r-2})}$ is a characteristic function of a single element in $G^2$, and all the other cases are sums of at most $|G|^2$ functions of this form. Since $G$ is finite, the argument of the lemma still works. Taking $A = \log^{16 + c}(N)$ for large enough $c$ is then enough to give the proposition inductively, as in Kane's paper; we are using $Q_{max}(D, N_0, r, 0) \le 1$ as the base case.
\end{proof}

We now sketch how to follow Kane's approach to this slightly new problem. We are in the context of Theorem \ref{thm:ksd_ugly}, so we are considering $M^{alt}(n)$ for $n \in X_{n_0, D}$, with $n = p_1\dots p_r$. $M^{alt}(n)$ defines an alternating form on $2r + t$ dimensional vectors over $\mathbb{F}_2$. Call the space of such vectors $V$. Then we get
\[2^{\text{crnk}(M^{alt}(n))} =2^{-2r - t}\cdot\sum_{\vv, \vv' \in V} (-1)^{\vv^{\top} M^{alt}(n) \vv'}.\]
For $k$ a nonnegative integer, define an alternating form $\Phi^k_{(p_1, \dots, p_r)}$ on $V^{\oplus k}$ by
\[\Phi^k_{(p_1, \dots, p_r)}((\vv_1, \dots, \vv_k), (\vv'_1, \dots, \vv_k)) := \sum_{1 \le i \le k} \vv_i^{\top} M^{alt}(n) \vv_i'.\]
Then we get
\[2^{k \cdot \text{crnk}\, (M^{alt}(n))} = 2^{-k(2r + t)} \cdot \sum_{\bar{\vv}, \bar{\vv}' \in V^{\oplus k}} (-1)^{\Phi^k_{(p_1, \dots, p_r)}(\bar{\vv}, \bar{\vv}')}.\]
In particular, the average size of $2^{k \cdot \text{crnk}\, (M^{alt}(n))}$ over $S_{N, r, D}$ is
\[\frac{2^{-k(2r + t)}}{|S_{N, r, D}|} \sum_{S_{N, r, D}} \sum_{\bar{\vv}, \bar{\vv}' \in V^{\oplus k}} (-1)^{\Phi_{(p_1, \dots, p_r)}^k(\bar{\vv}, \bar{\vv}')},\]
assuming there is at least one such $n$.

Switching the order of summation reveals how Proposition \ref{prop:prop9} is used. At this point, Kane's argument works essentially without modification. Suppose that $\log \log N/2 < r < 2\log \log N$. Then, in this sum, the $(\bar{\vv}, \bar{\vv}')$ that lead to large $m$ in the context of Proposition \ref{prop:prop9} contribute negligibly to the eventual sum as $r$ increases. However, as Kane showed, the number of $(\bar{\vv}, \bar{\vv}')$ with $m$ small is vanishingly small compared to $2^{k(2r + t)}$ as $r$ increases, so these terms are also negligible. Then the only terms that matter are those that have $m = 0$, and hence depend only on the $p_i$ mod $8D$. Proposition 10 of \cite{Kane13} smooths out this information, and we find that, as $r$ increases, the average of $2^{k \cdot \text{crnk}\, (M^{alt}(n))}$ over $S_{N, r, D}$ approaches its average over all bit assignments of the associated Legendre symbols.

Since all but vanishingly few squarefree $n < N$ have between $(\log \log N)/2$ and $2\log\log N$ prime divisors, we have that the average sizes over all squarefree $n < N$ coprime to $2D$ approach the same limit as the average sizes over all bit assignments as $N$ increases. Proposition \ref{prop:sel3} then tells us that these average sizes are consistent with the $\alpha_k$ of Theorem \ref{thm:ksd_ugly}, and the final section of Kane's paper reverse engineers the distribution of coranks from the average values of the $2^{k \cdot \text{crnk}\, (M^{alt}(n))}$. This finishes the argument, proving Theorem \ref{thm:ksd_ugly}.

\bibliography{references}{}
\bibliographystyle{amsplain}

\end{document}